\definecolor{darkblue}{rgb}{0,0.1,.5}
\theoremstyle{plain}
\newtheorem{theorem}{Theorem}[section]
\newtheorem{lemma}[theorem]{Lemma}
\newtheorem{proposition}[theorem]{Proposition}
\newtheorem{conjecture}[theorem]{Conjecture}
\theoremstyle{definition}
\newtheorem{definition}[theorem]{Definition}
\newtheorem{example}[theorem]{Example}
\theoremstyle{remark}
\newtheorem*{remark}{Remark}
\numberwithin{equation}{section}
\def \begineq{\begin{equation}}
\def \endeq{\end{equation}}
\def \bb{\mathbb}
\def \RR{{\bb{R}}}
\def \D{{\bb{D}}}
\def \T{{\bb{T}}}
\def\Bier{\mathrm{Bier}}
\def\MF{\mathrm{MF}}
\def\M{\mathrm{M}}
\def\vc{\mathrm{vc}}
\def\cat{\mathrm{cat}}
\def\sk{\mathrm{sk}}
\def\zk{\mathcal Z_K}
\def\rk{\mathcal R_K}
\def\N{\mathbb N}
\def\Z{\mathbb Z}
\def\R{\mathbb R}
\def\C{\mathbb C}
\DeclareMathAlphabet{\mathbbmsl}{U}{bbm}{m}{sl}
\DeclareMathOperator{\link}{Link}
\DeclareMathOperator{\del}{Del}
\DeclareMathOperator{\cone}{Cone}
\title[Chromatic numbers, Buchstaber numbers and chordality of Bier spheres]{Chromatic numbers, Buchstaber numbers\\ and chordality of Bier spheres}
\author[Limonchenko]{Ivan Limonchenko}
\address{Mathematical Institute of the Serbian Academy of Sciences
and Arts (SASA), Belgrade, Serbia}
\email{ivan.limoncenko@turing.mi.sanu.ac.rs}
\author[Vavpeti\v{c}]{Ale\v{s} Vavpeti\v{c}}
\address{Faculty of Mathematics and Physics, University of Ljubljana, Slovenia\newline\indent
Institute of Mathematics, Physics and Mechanics, Ljubljana, Slovenia}
\email{ales.vavpetic@fmf.uni-lj.si}
\subjclass[2020]{05C15, 05E45, 13F55, 52B12, 57S12}
\keywords{Bier sphere, Buchstaber number, chordal graph, chromatic number, stacked polytope}
\begin{document}

\begin{abstract}
We describe all the Bier spheres of dimension $d$ with chromatic number equal to $d+1$ and prove that all other $d$-dimensional Bier spheres have chromatic number equal to $d+2$, for any integer $d\geq 0$. Then we prove a general formula for complex and mod $p$ Buchstaber numbers of a Bier sphere $\Bier(K)$, for each prime $p\in\N$ in terms of the $f$-vector of the underlying simplicial complex $K$. Finally, we classify all chordal Bier spheres and obtain their canonical realizations as boundaries of stacked polytopes. 
\end{abstract}

\maketitle

%%%%%%%%%%%%%%%%%%%%%%%%%%%%%%%%%%%%%%%%%%%%%
\section{Introduction}
%%%%%%%%%%%%%%%%%%%%%%%%%%%%%%%%%%%%%%%%%%%%%

This paper is a part of a program initiated in~\cite{LS}, with the goal of constructing smooth moment-angle manifolds and toric manifolds over Bier spheres and applying them to solve a few important problems in algebraic and equivariant topology.
Moment-angle manifolds and (quasi)toric manifolds
were introduced in~\cite{DJ} and have become the main objects of study in toric topology~\cite{TT}. It is a modern area at the crossroads of algebraic and equivariant topology, where combinatorial properties of an abstract simplicial complex $K$ on $[m]:=\{1,2,\ldots,m\}$ are studied in their relationship with the topological properties of the corresponding moment-angle complex $\zk\subset\C^m$, its real counterpart $\rk\subset\R^m$, as well as their orbit spaces with respect to a freely acting subtorus $H$ in $\T^m$ and $\Z_2^m$, respectively. The latter class of partial quotients includes (quasi)toric manifolds and their real counterparts as the subclass, where the toric subgroup $H$ has a maximal possible rank. 

Quasitoric manifolds found numerous applications in algebraic and equivariant topology, with probably the most remarkable one being that they yield geometric representatives in unitary~\cite{BPR} and special unitary~\cite{CLP} bordism ring classes. However, quasitoric manifolds are not necessarily toric and therefore do not solve the Hirzebruch problem in algebraic topology: find a nonsingular connected algebraic variety representing a given unitary bordism class. One of our goals in the above mentioned program is to construct nonsingular complete toric varieties (toric manifolds) over Bier spheres and describe all the unitary bordism classes in which there exists such a toric manifold. Other goals will also be described in this section of the paper. 

Recall that in the unpublished note~\cite{Bier}, any (abstract) simplicial complex $K$ on $[m]$ with $m\geq 2$, different from the whole simplex $\Delta_{[m]}$ on $[m]$, was associated with a simplicial complex $\Bier(K)$ defined as the deleted join of $K$ and its Alexander dual $K^\vee$. Moreover, it was shown in~\cite{Bier} that $\Bier(K)$ is an $(m-2)$-dimensional PL-sphere with a number of vertices varying between $m$ and $2m$. Since that time, this beautiful and simple construction has been studied intensively and found important applications in topological combinatorics~\cite{Matousek},  polytope theory~\cite{Zivaljevic21}, optimization theory~\cite{Zivaljevic19}, game theory~\cite{TZJ}, combinatorial commutative algebra~\cite{HK, Mu} and toric topology~\cite{LS, LZ}. New proofs of the fact that it yields a sphere were also obtained, see~\cite{dL, Matousek, Mu}.

Combinatorial and geometrical methods were used to classify all Bier spheres of dimension less than three~\cite{LS, LZ}, all flag Bier and Murai spheres~\cite{HK, LZ} and to show that Bier spheres and their generalizations are shellable and edge decomposable~\cite{BPSZ05, Mu} as well as that asymptotically the vast majority of them are non-polytopal~\cite{Matousek}, although no particular examples of a non-polytopal Bier sphere have been found so far. Furthermore, methods of homological algebra made it possible to compute certain multigraded Betti numbers of Bier spheres and their generalizations~\cite{HK, Mu}. Finally, the application of topological combinatorics methods showed that all Bier spheres are starshaped~\cite{Zivaljevic19}, and methods of toric topology yielded the classification of all minimally non-Golod Bier spheres~\cite{LZ}. 

The structure of the paper is as follows. In Section 2, we recall the key definitions and introduce basic examples related to the notions of a simplicial complex, simple graph, convex polytope, and Bier sphere.

The first combinatorial invariant of a Bier sphere studied in this paper is the chromatic number discussed in Section 3. By definition, this is the least number of colors needed for a proper coloring of the vertex set of a simplicial complex; that is, the vertices of each simplex are colored in pairwise different colors. The chromatic number is one of the most important classical combinatorial invariants of a simple graph as well as of a simple polytope. By definition, the chromatic number $\chi(P)$ of an $n$-dimensional convex simple polytope $P$ equals that of its nerve complex $\partial P^*$ and hence $m\geq \chi(P)\geq n$, where $m$ is the number of facets of $P$. A combinatorial description of $n$-colorable simple $n$-polytopes was obtained in~\cite{Joswig}.  Computation of chromatic numbers for families of simple polytopes is a classical open problem in graph theory, polytope theory and applications, see~\cite{BIP}.

Recall that a triangulated sphere is called polytopal if it is simplicially isomorphic to the boundary of a convex simplicial polytope. We prove that for any simplicial complex $K\neq\Delta_{[m]}$ on $[m]$ with $m\geq 2$ the chromatic number $\chi(\Bier(K))$ equals either $m-1$, or $m$. More precisely, the following statement holds, see \Cref{BierChromaticTheorem}.

\begin{theorem}
Let $K\neq\Delta_{[m]}$ be a simplicial complex on $[m]$ with $m\geq 2$. Then 
\begin{enumerate}
\item[(a)] $m-1\leq \chi(\Bier(K))\leq m$;
\item[(b)] $\chi(\Bier(K))=m-1$ if and only if $\Bier(K)$ is polytopal and the next equivalent conditions hold:
\begin{itemize}
\item $K\in\{C^{m-3}(\Gamma_4)$, $C^{m-3}(\Gamma_4^{\vee})$, $C^{m-3}(G_4)$, $C^{m-3}(\Gamma_6)\}$;
\item $P_K\in\{I^{m-1}, I^{m-3}\times P_6\}$.
\end{itemize}
\end{enumerate}
\end{theorem}

Here, we use the notation $C^{\ell}(\mathcal K)$ for the $\ell$ times iterated cone over a simplicial complex $\mathcal K$ and the corresponding 4 graphs are described in Figure~\ref{fig:1complexes}. We also denote by $P_K$ a simple polytope such that for a polytopal Bier sphere $\Bier(K)$ one has a combinatorial equivalence $\Bier(K)=\partial P_K^*$ and use the notations $I^n$, $n\geq 0$ for the standard $n$-dimensional cube and $P_m$, $m\geq 3$ for an $m$-gon. A combinatorial classification of simple $n$-polytopes $P$ with $\chi(P)=n+1$ remains unknown in polytope theory. Our theorem implies that every Bier polytope $P_K$ with $K$ different from each of the complexes described explicitly in~\Cref{BierChromaticTheorem} belongs to this class and, in view of the result of~\cite{Joswig}, it has a 2-dimensional face with an odd number of vertices. 

The second combinatorial invariant of a Bier sphere studied in this paper is the Buchstaber number discussed in Section 4. By definition, the (complex) Buchstaber number of a simplicial complex $K$ with $m$ vertices is the maximal integer $r$ such that there exists a characteristic map $\Lambda\colon [m]\to \Z^{m-r}$; that is, each simplex of $K$ is mapped to a part of a lattice basis of $\Z^{m-r}$. We denote it by $s(K)$. Replacing $\Z$ by $\Z_p$ for a prime $p\in\N$ and the integer lattice by a $\Z_p$-vector space above, we get a definition of the mod $p$ Buchstaber number of $K$. We denote it by $s_p(K)$. The theory of complex and real Buchstaber numbers has been developed in the framework of toric topology and combinatorial commutative algebra, see~\cite{Ayz10, Ayz16, Er08, Er14, BG}.

These invariants play a crucial role in toric topology~\cite{BP02,TT} as they reflect important topological properties of polyhedral products~\cite{BBCG}. Namely, the complex Buchstaber number of $K$ equals the maximal rank of a toric subgroup $H$ in the compact torus $\T^m\subset (\D^2)^m$ such that the restricted coordinatewise action of $H$ on the moment-angle-complex $\zk\subseteq (\D^2)^m$ is free. Similarly, one can interpret the mod $p$ Buchstaber numbers, see~\cite{BVV}.

It turns out that chromatic and Buchstaber numbers of an $(n-1)$-dimensional simplicial complex $K$ with $m$ vertices are closely related to each other by the following inequalities:
$$
m-\chi(K)\leq s(K)\leq s_p(K)\leq m-n\,\text{ for any prime }p\in\N,
$$
which we discuss and use in Section 4. The case of the maximal Buchstaber invariant $s(K)=m-n$ and a starshaped sphere $K$ is of particular importance in toric topology; then the $(m+n)$-dimensional cellular space $\zk$ acquires an equivariant smooth structure and the above mentioned orbit space construction yields a stably complex $2n$-dimensional closed, orientable manifold $M(K,\Lambda)=\zk/H$, which generalizes the notion of a quasitoric manifold corresponding to a polytopal sphere $K$, see~\cite{TT}. Classification of these manifolds and, in particular, a combinatorial description of all starshaped spheres with a maximal Buchstaber number remains an open problem of equivariant topology.

In Section 4 we show that all Bier spheres belong to this class of starshaped spheres, which gives rise to a new wide class of quasitoric manifolds. Indeed, due to~\cite{Er08}, $\chi(K)\in\{n, n+1\}$ for a polytopal sphere $K$ implies that $s(K)=m-n$; then Theorem~\ref{BierChromaticTheorem} (in the polytopal case) and our construction of a canonical characteristic map (in the general case) yield the following general formula, see Theorem~\ref{BuchNumTheo}, for all the Buchstaber numbers of $\Bier(K)$ in terms of the $f$-vector of $K$ only.

\begin{theorem}
Let $K\neq\Delta_{[m]}$ be a simplicial complex on $[m]$ with $m\geq 2$. Then we have
$$
s(\Bier(K))=s_p(\Bier(K))=f_0(K)-f_{m-2}(K)+1\,\text{ for any prime }p\in\N.
$$
\end{theorem}

In our subsequent publications, we are going to use the previous result in order to show that our canonical characteristic map $\Lambda_K$ for a Bier sphere $\Bier(K)$ determines the rays of a complete regular fan, whose underlying simplicial complex is isomorphic to $\Bier(K)$. Therefore, the corresponding manifold $M(\Bier(K),\Lambda_K)$ acquires a structure of a toric manifold. Taking $\Bier(K)$ to be a non-polytopal Bier sphere, this immediately solves another problem of equivariant topology: to show the existence of infinitely many toric, but not quasitoric manifolds.

Finally, the chordality of Bier spheres is discussed in Section 4. Recall that a simple graph is chordal if it contains no induced subgraphs being cycles of length greater than three. We call a simplicial complex chordal if its 1-skeleton is a chordal graph. It turns out that this combinatorial property acquires the following algebraic and topological interpretations in the framework of toric topology. 

We are interested here in the following open problem of algebraic topology: classify all closed connected manifolds $M$ with $\cat(M)=2$, where $\cat(X)$ denotes the Lusternik-Schnirelmann category of a topological space $X$. It is well-known that $\cat(X)=1$ if and only if $X$ is a co-H-space, hence $\cat(\zk)=1$ for a starshaped sphere $K$ if and only if $K$ is a boundary of a simplex. Thus, our next goal is to describe the class of moment-angle manifolds $\zk$ with $\cat(\zk)=2$ in combinatorial and algebraic terms. By the result of~\cite{DKR}, a closed connected $n$-manifold $M$ with $n>2$ has $\cat(M)=2$ only if $\pi_1(M)$ is a free group. Furthermore, it was shown in~\cite{PV16} that the fundamental group $\pi_1(\rk)$ of a real moment-angle-complex $\rk$ is isomorphic to the commutator subgroup in the right-angled Coxeter group of $\sk^1(K)$ and the latter is a free group if and only if $K$ is chordal. Therefore, given a starshaped sphere $K$ of dimension $n-1>1$, if the real moment-angle manifold $\rk$ of dimension $n$ has $\cat(\rk)=2$, then $K$ is chordal. 

On the other hand, due to~\cite{OP}, for a 3-dimensional simplicial complex $K$, the cohomology ring $H^*(\zk)$ of its moment-angle-complex $\zk$ is isomorphic to the cohomology ring of a connected sum of products of spheres if and only if either (i) $K$ is the nerve complex of the 4-cube $I^4$, or (ii) $K$ is chordal, or (iii) $K$ has only two minimal non-faces and they form a chordless 4-cycle. These results allow us to formulate the next conjecture similar in a sense to the statement of~\cite[Theorem 4.6]{GPTW}, which asserts that a flag complex $K$ yields a Golod face ring $\Bbbk[K]$ over any field $\Bbbk$ if and only if $K$ is chordal if and only if its moment-angle-complex $\zk$ is a homotopy wedge of spheres and hence $\cat(\zk)=1$. 
\begin{conjecture}\label{conj}
Let $K$ be an $(n-1)$-dimensional chordal starshaped sphere with $n>2$. Then the next conditions are equivalent:
\begin{enumerate}
\item $\cat(\zk)=2$;
\item $\cat(\rk)=2$;
\item $\zk$ is homeomorphic to a connected sum of sphere products with two spheres in each product;
\item $\rk$ is homeomorphic to a connected sum of sphere products with two spheres in each product;
\item $\Bbbk[K]$ is minimally non-Golod over any field $\Bbbk$.
\end{enumerate}
\end{conjecture}

In this paper, we are going to make a first step towards the proof of the above conjecture in the class of moment-angle manifolds over Bier spheres. Namely, in~\Cref{ChordalBierTheorem}, we classify all chordal and all stacked Bier spheres by proving the following result.

\begin{theorem}
Let $K\neq\Delta_{[m]}$ be a simplicial complex on $[m]$ with $m\geq 2$. Then the following three statements are equivalent:
\begin{itemize}
\item[(a)] $\Bier(K)$ is a chordal complex;
\item[(b)] $\Bier(K)$ is a polytopal sphere and one of the next three cases holds:
\begin{enumerate}
\item $P_K=\Delta^1$, where $m=2$;
\item $P_K=\Delta^2$, where $m=3$;
\item $P_K\in\vc^{k}(\Delta^{m-1})$, where $0\leq k\leq m$ and $m\geq 4$. \end{enumerate}
\item[(c)] One of the next three cases holds:
\begin{enumerate}
\item $m=2$;
\item $m=3$ and either $K$ or $K^\vee$ equals $\partial\Delta^2$;
\item $m\geq 4$ and either $K$ or $K^\vee$ has no edges.
\end{enumerate}
\end{itemize}
If $m\leq 3$, then $\Bier(K)$ is a stacked sphere being either the boundary of a simplex, or the $k$-cycle with $k=4, 5, $ or $6$. If $m\geq 4$, then $P_K$ is obtained by cutting
off $k$ vertices of the simplex $\Delta^{m-1}$, where $k$ is the number of vertices of the corresponding complex without edges; in this case, {\rm (a)}, {\rm (b)} and {\rm (c)} are equivalent to:
\begin{itemize}
\item[(d)] $\Bier(K)$ is a stacked sphere.
\end{itemize}
\end{theorem}

Here, by $\vc^{k}(\Delta^{m-1})$ we mean a set of (combinatorial) truncation polytopes; each of them can be obtained from $\Delta^{m-1}$ by a sequence of $k$ vertex cuts by hyperplanes in general position. It follows from~\cite[Theorem 8.5]{Kalai} that a simplicial $(d-1)$-sphere with $d\geq 3$ is a stacked sphere (i.e. is isomorphic to the nerve complex of a truncation polytope) if and only if it is chordal and has no minimal non-faces of cardinality $k$ for $2 < k < d$. Obviously, in case $m\leq 3$, all Bier spheres are stacked, see~\cite[Example 2.8]{LZ}. As \Cref{ChordalKeyLemma} asserts, in case $m\geq 4$, the chordality of $\Bier(K)$ is equivalent to either $K$ or $K^\vee$ having no edges. This yields a canonical realization for the corresponding Bier sphere as a boundary of a stacked polytope; that is, one gets $\Bier(K)$ in that case as a boundary of a simplicial $(m-1)$-dimensional polytope obtained from the simplex $\Delta^{m-1}$ by performing stellar subdivisions (gluing pyramids) over some of its facets. Based on this classification, we are going to prove Conjecture~\ref{conj} for all chordal Bier spheres in our subsequent publications.

%%%%%%%%%%%%%%%%%%%%%%%%%%%%%%%%%%%%%%%%%%%%%
\section{Basic definitions and constructions}
%%%%%%%%%%%%%%%%%%%%%%%%%%%%%%%%%%%%%%%%%%%%%

In this section, we introduce our notations and recall some necessary definitions, examples, and results related to the Bier sphere construction as well as to the notion of chromatic number for simple graphs and, more generally, for simplicial complexes. 

\begin{definition}
Set $[m]:=\{1,2,\ldots,m\}$ for an integer $m\geq 1$. We say that a non-empty subset $K\subseteq 2^{[m]}$ is an {\emph{abstract simplicial complex on}} $[m]$ if the following condition holds:
$$
\sigma\in K, \tau\subseteq\sigma\Rightarrow \tau\in K.
$$
Elements of $K$ are called its {\emph{faces}} or {\emph{simplices}}. Faces of cardinality $1$ are called (geometrical) {\emph{vertices}}, and faces of cardinality $2$ are called {\emph{edges}}.

The set of vertices of $K$ will be denoted by $V(K)$. Maximal (by inclusion) simplices of $K$ are called its {\emph{facets}}, and their set is denoted by $\M(K)$; we say that facets of $K$ \emph{generate} $K$. 

Minimal (by inclusion) subsets in $[m]$ that are not simplices of $K$ are called its {\emph{minimal non-faces}}, and their set is denoted by $\MF(K)$. Minimal non-faces of cardinality $1$ are called {\emph{ghost vertices}}.

Recall that the {\emph{dimension}} of a simplex is one less than its cardinality; the {\emph{dimension}} of a simplicial complex $K$ equals the maximal dimension of its simplex and is denoted by $\dim(K)$. If all maximal faces of $K$ have the same cardinality, then $K$ is called {\emph{pure}}.
\end{definition}

One of the most important {\emph{combinatorial invariants}} (i.e. characteristics invariant under a face lattice isomorphism) of a simplicial complex is its face vector. Namely, the $f$-\emph{vector} of a simplicial complex $K$ on $[m]$ is a tuple $f(K):=(f_{-1}(K),\ldots,f_{m-1}(K))$, where $f_i(K)$ denotes the number of faces of dimension $i$ in $K$. Note that $f_{-1}(K)=1$ for any simplicial complex $K$, since $\varnothing\in K$ is the only simplex in $K$ of dimension $-1$, and $f_i(K)=0$ if $i>\dim(K)$.

\begin{example}
Let $P$ be a convex simple $n$-dimensional polytope with $m$ facets; that is, a bounded intersection of $m$ closed halfspaces in $\R^n$ being in general position. Then the boundary $\partial P^*$ of its dual simplicial polytope $P^*$ is a geometric realization of a certain abstract simplicial complex which we denote by $K_P$ and call the nerve complex of $P$. Thus, we have:
$$
f_0(K_P)=m\text{ and }\dim(K_P)=n-1.
$$
\end{example}

\begin{example}\label{OneDimBierExample}
Let $m\ge 3$ and $P_m\subset\RR^2$ be an $m$-gon. Then its dual polytope $P_m^*$ is isomorphic to $P_m$. Therefore, $P_m$ is a simple and a simplicial polytope at the same time. The nerve complex of $P_m$ equals its boundary $\partial P_m$, which is the $m$-cycle denoted in what follows by $Z_m$.
\end{example}

\begin{figure}[ht]
\begin{tikzpicture}
\path 
  coordinate (A) at (210:1)
  coordinate (B) at (-30:1)
  coordinate (C) at (90:1);
\draw (A) -- (B) -- (C) -- cycle;
\fill[black] (A) circle (0.05)
             (B) circle (0.05)
             (C) circle (0.05);
\draw (-90:0.9) node{$Z_3$};              
\end{tikzpicture}
\hskip10mm
\begin{tikzpicture}
\path 
  coordinate (A) at (225:1)
  coordinate (B) at (-45:1)
  coordinate (C) at (45:1)
  coordinate (D) at (135:1);
\draw (A) -- (B) -- (C)--(D)--cycle;
\fill[black] (A) circle (0.05)
             (B) circle (0.05)
             (C) circle (0.05)
             (D) circle (0.05);
\draw (-90:1.1) node{$Z_4$}; 
\end{tikzpicture}
\hskip10mm
\begin{tikzpicture}
\path 
  coordinate (A) at (90:1)
  coordinate (B) at (162:1)
  coordinate (C) at (234:1)
  coordinate (D) at (306:1)
  coordinate (E) at (378:1);
\draw (A)--(B)--(C)--(D)--(E)--cycle;
\fill[black] (A) circle (0.05)
             (B) circle (0.05)
             (C) circle (0.05)
             (D) circle (0.05)
             (E) circle (0.05);
\draw (-90:1.2) node{$Z_5$}; 
\end{tikzpicture}
\hskip10mm
\begin{tikzpicture}
\path 
  coordinate (A) at (0:1)
  coordinate (B) at (60:1)
  coordinate (C) at (120:1)
  coordinate (D) at (180:1)
  coordinate (E) at (240:1)
  coordinate (F) at (300:1);
\draw (A)--(B)--(C)--(D)--(E)--(F)--cycle;
\fill[black] (A) circle (0.05)
             (B) circle (0.05)
             (C) circle (0.05)
             (D) circle (0.05)
             (E) circle (0.05)
             (F) circle (0.05);
\draw (-90:1.2) node{$Z_6$}; 
\end{tikzpicture}
\caption{The only cycles that are 1-dimensional Bier spheres.}\label{fig:1BierSpehers}
\end{figure}
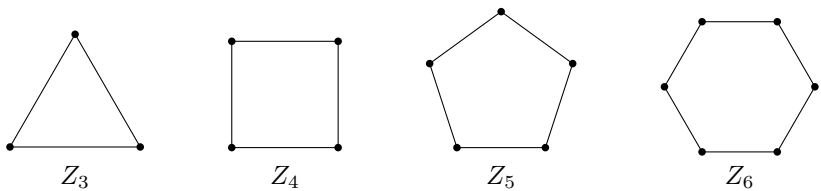

\begin{example}
Consider the following simplicial complexes on $[m]$ with $m\geq 1$:
\begin{itemize}
\item $\varnothing_{[m]}:=\{\varnothing\}$, the void complex; 
\item $\Delta_{[m]}:=2^{[m]}$, the simplex;
\item $\partial \Delta_{[m]}:=2^{[m]}\setminus\{[m]\}$, the boundary of the simplex.
\end{itemize}
Note that $\dim(\varnothing_{[m]})=-1$, $\dim(\Delta_{[m]})=m-1$, and $\dim(\partial\Delta_{[m]})=m-2$, for any $m\geq 1$. Moreover, all the three simplicial complexes are pure and $\MF(\Delta_{[m]})=\varnothing$, $\MF(\partial\Delta_{[m]})=\{[m]\}$, while $\varnothing_{[m]}$ has $m$ ghost vertices. 
\end{example}

In what follows, we sometimes interchangeably use the notations $i$ and $\{i\}$ for a (ghost) vertex of a simplicial complex $K$ on $[m]$ when $i\in [m]$.

\begin{definition}
Let $K$ be a simplicial complex on $[m]$ and $I\subseteq [m]$. We call the simplicial complex
$$
K_I:=\{J\in K\,|\,J\subseteq I\}=K\cap 2^I
$$
the \emph{full subcomplex} in $K$ on the set $I$. In particular, if $i\in [m]$, then the \emph{deletion} of $i$ from $K$ is the full subcomplex $\del_{K}(i)$ in $K$ on the set $[m]\setminus\{i\}$. If $I\in K$ we call the simplicial complex
$$
\link_K(I):=\{J\in K\,|\,I\cup J\in K, I\cap J=\varnothing\}
$$
the {\emph{link}} in $K$ of the face $I$.
\end{definition}

\begin{example}
Let $K$ be a simplicial complex on $[m]$. Observe that the following equivalences hold:
\begin{itemize}
\item $I\in K$ if and only if $K_I=\Delta_{I}$;
\item $I\in \MF(K)$ if and only if $K_I=\partial\Delta_{I}$;
\item $\link_K(I)=K$ if and only if $I=\varnothing$;
\item $\link_K(I)=\{\varnothing\}$ if and only if $I\in M(K)$.
\end{itemize}
\end{example}

A particularly important class of simplicial complexes is formed by all (simple) graphs. Throughout this paper, by a \emph{graph} $\Gamma=\Gamma(V,E)$ with the \emph{vertex set} $V$ and the \emph{edge set} $E\subseteq \binom{V}{2}$ we mean a simplicial complex on $V$ of dimension not greater than $1$ and without ghost vertices. 

The \emph{$n$-skeleton} $\sk^n(K), n\geq 0$ of a simplicial complex $K$ is a simplicial complex with no ghost vertices consisting of all simplices of $K$ of dimension not greater than $n$. Note that both $\sk^0(K)$ and $\sk^1(K)$ are graphs. Moreover, one has: 
$$
V(\sk^0(K))=V(K)=V(\sk^1(K))\text{ and }\sk^0(K)=V(K)\sqcup\{\varnothing\},
$$
for any simplicial complex $K$.

Here is a definition of a simplicial operation, which will be used frequently in what follows.

\begin{definition}\label{ConeSuspensionDef}
Let $K_1$ be a complex on $V_1$ and $K_2$ be a complex on $V_2$ with $V_1\cap V_2=\varnothing$. Then we define their \emph{join} $K$ as a simplicial complex on $V_1\sqcup V_2$ such that
$$
K=K_1\ast K_2:=\{\sigma_1\sqcup\sigma_2\,|\,\sigma_i\in K_i,\, i=1,2\}.
$$
In particular, we have 
\begin{itemize}
\item $\cone_v(K):=\{v\}\ast K$, the \emph{cone} over $K$ with \emph{apex} $v$; 
\item $\Sigma_{\{v,w\}}(K):=\partial\Delta_{\{v,w\}}\ast K$, the \emph{suspension} over $K$ with \emph{vertex pair} $\{v,w\}$.
\end{itemize}    
\end{definition}

In particular, if $K=\cone_{v}(L)$ is a cone over $L$ with apex $v$, then $L=\del_{K}(v)$ is a deletion of $v$ from $K$.

\begin{definition}
Let $\Gamma=\Gamma(V,E)$ be a graph with the vertex set $V$ and the edge set $E$. A \emph{proper coloring} of $\Gamma$ is a surjective map $c\colon V\to C$ such that
$$
\text{if}\quad\{i,j\}\in E,\quad\text{then}\quad c(i)\ne c(j).
$$
The \emph{chromatic number} $\chi(\Gamma)$ of the graph $\Gamma$ is the smallest cardinality of the \emph{set of colors} $C$ such that there exists a proper coloring $c\colon V\to C$.
We set the {\emph{chromatic number}} $\chi(K)$ of a simplicial complex $K$ to be equal to $\chi(\sk^1(K))$ and we say that $c$ is a \emph{proper coloring} of $K$ if it is a proper coloring of $\sk^1(K)$. Finally, we set the chromatic number $\chi(P)$ of a simple polytope $P$ to be equal to $\chi(K_P)$ and, similarly, we introduce the notion of a {\emph{proper coloring}} of $P$.    
\end{definition}

The next result is a direct corollary from the definition of a chromatic number and is well-known.

\begin{proposition}\label{BasicChromaticNumbersProp}
The following statements hold.
\begin{itemize}
\item[(a)] Let $m\geq 3$. Then one has:
$$
\chi(P_m)=2,\text{ if $m$ is even and }\chi(P_m)=3,\text{ if $m$ is odd.}
$$
\item[(b)] For any simplicial complex $K$ and a disjoint point $v$ one has: 
$$
\chi(\cone_v(K))=\chi(K)+1.
$$
\item[(c)] For any simplicial complex $K$ and two distinct points, $v$ and $w$, disjoint from $K$ one has: 
$$
\chi(\Sigma_{\{v,w\}}(K))=\chi(K)+1.
$$
\end{itemize}
\end{proposition}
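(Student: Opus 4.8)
The plan is to push all three statements down to the level of $1$-skeleta, since by definition the chromatic number of a simplicial complex depends only on its $\sk^1$, and then to dispatch the cone and suspension cases uniformly by analysing how a proper coloring interacts with the apex vertices.

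For part (a) I would first note that $\chi(P_m)=\chi(K_{P_m})=\chi(Z_m)$ by the definition of the chromatic number of a simple polytope together with the identification of the nerve complex of $P_m$ with the $m$-cycle $Z_m$ from the preceding example. The claim then reduces to the classical statement that the cycle $Z_m$ admits a proper $2$-coloring exactly when it is bipartite, which occurs iff $m$ is even: walking once around the cycle forces the colors to alternate, and this is consistent iff the length is even. When $m$ is odd one exhibits an explicit proper $3$-coloring (two colors alternating along all but the last vertex, which receives the third), so $\chi(Z_m)=3$.

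For parts (b) and (c) the first step is to read off the relevant graphs from \Cref{ConeSuspensionDef}. The edges of $\cone_v(K)=\{v\}\ast K$ are precisely the edges of $K$ together with all pairs $\{v,i\}$ with $i\in V(K)$; hence $\sk^1(\cone_v(K))$ is $\sk^1(K)$ with one new vertex $v$ joined to every vertex of $K$. Similarly, since $\partial\Delta_{\{v,w\}}=\{\varnothing,\{v\},\{w\}\}$ carries no edge $\{v,w\}$, the graph $\sk^1(\Sigma_{\{v,w\}}(K))$ is $\sk^1(K)$ with two new vertices $v,w$, each joined to all of $V(K)$ but not to one another. In either case the apex vertices are adjacent to every vertex of $K$, so in any proper coloring their colors must lie outside the color set used on $V(K)$; since the restriction of the coloring to $V(K)$ is itself proper, that set has at least $\chi(K)$ elements, and this gives the lower bound $\chi\ge\chi(K)+1$ in both cases. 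For the matching upper bound, extend an optimal proper $\chi(K)$-coloring of $\sk^1(K)$ by assigning one fresh color to $v$ in case (b), and the same fresh color to both $v$ and $w$ in case (c); each extension is a proper coloring using $\chi(K)+1$ colors.

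The only genuinely delicate point---and the sole place where (b) and (c) diverge---is the verification that adjoining both apexes in the suspension costs merely one extra color rather than two. This is legitimate precisely because the suspension omits the edge $\{v,w\}$, so the non-adjacent vertices $v$ and $w$ are permitted to share their new color; were that edge present (as in a cone over a cone) the cost would be two. Apart from this observation, the argument is routine bookkeeping with the definition of a proper coloring.
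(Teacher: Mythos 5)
Your proof is correct and is exactly the routine verification the paper has in mind: the paper states this proposition without proof, calling it a direct corollary of the definitions, and your argument (reducing to $\sk^1$, bipartiteness of $Z_m$, and the observation that the suspension omits the edge $\{v,w\}$ so both apexes can share one fresh color) supplies precisely the omitted details.
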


Now we are going to introduce the main object of study in this paper, a Bier sphere. In the definition below, we use the following notations. Let $[m]:=\{1,2,\ldots,m\}$ and $[m']:=\{1',2',\ldots,m'\}$ be two disjoint ordered sets with the map $\phi\colon i\mapsto i', 1\leq i\leq m$ being an order preserving bijection between them. Denote by $I'$ the image $\phi(I)$ of a subset $I\subseteq [m]$.
 
\begin{definition}
Suppose $K\neq\Delta_{[m]}$ is a simplicial complex on $[m]$ with $m\geq 2$. Then 
\begin{itemize}
\item the {\emph{Alexander dual}} of $K$ is a simplicial complex $K^\vee$ on $[m']$ such that 
$$
I\in \MF(K)\Longleftrightarrow [m']\setminus I'\in \M(K^\vee);
$$
\item the {\emph{Bier sphere}} of $K$ is a simplicial complex $\Bier(K)$ on $[m]\sqcup [m']$ such that 
$$
\Bier(K)=\{I\sqcup J'\,|\,I\in K, J'\in K^\vee, I\cap J=\varnothing\};
$$
that is, $\Bier(K)$ is the {\emph{deleted join}} of $K$ and $K^\vee$.
\end{itemize}
\end{definition}

It was shown in~\cite{Bier} that $\Bier(K)$ defined above is a PL-sphere of dimension $(m-2)$ with the number of vertices varying between $m$ and $2m$. It follows from the definition that the Bier sphere construction is symmetric: $\Bier(K)=\Bier(K^\vee)$. In \Cref{fig:1BierSpehers} one can find the geometric realizations of all possible 1-dimensional Bier spheres, see also~\cite[Example 2.8]{LZ}.

\begin{example}\label{ex:simplex}
Let $K_1=\Delta_{[m-1]}$ and $K_2=\Delta_{[m-2]}$ be simplicial complexes on $[m]$, where $m\ge 3$. Then $K_1^\vee=\Delta_{[(m-1)']}$ and $K_2^\vee= \Delta_{[(m-1)']}\cup \Delta_{[(m-2)']\cup\{m'\}}$. Observe that a maximal simplex in $\Bier(K_1)$ is of a form $([m-1]\setminus I)\sqcup I'$, where $I\subseteq [m-1]$, hence
$$
\Bier(K_1)=\partial\Delta_{\{1,1'\}}\ast\partial\Delta_{\{2,2'\}}\ast\cdots\ast\partial\Delta_{\{m-2,(m-2)'\}}\ast\partial\Delta_{\{m-1,(m-1)'\}}.
$$
Furthermore, a maximal simplex in $\Bier(K_2)$ is of a form $([m-2]\setminus I)\sqcup I'\sqcup\{k'\}$, where $I\subseteq [m-2]$ and $k\in\{m-1,m\}$, hence
$$
\Bier(K_2)=\partial\Delta_{\{1,1'\}}\ast\partial\Delta_{\{2,2'\}}\ast\cdots\ast\partial\Delta_{\{m-2,(m-2)'\}}\ast\partial\Delta_{\{(m-1)',m'\}}.
$$
Thus, we see that both $\Bier(K_1)$ and $\Bier(K_2)$ are $(m-3)$-times iterated suspensions over the 4-cycle $Z_4$, or, in other words, both are isomorphic to the boundary of the $(m-1)$-dimensional cross-polytope $(I^{m-1})^*$.
\end{example}

Taking into account \Cref{BasicChromaticNumbersProp} and \Cref{ex:simplex}, our first goal is to study the chromatic numbers of Bier spheres. When $m=2$, the only Bier sphere has a geometric realization as the set of two disjoint points, it has chromatic number $1$. Therefore, in what follows, if otherwise not stated explicitly, we always assume that a simplicial complex on $[m]$ under consideration is different from $\Delta_{[m]}$ and that $m$ is always greater than or equal to $3$.

When $m=3$, it is clear that the Bier sphere $\Bier(K)$ is a boundary of a polygon, and so its chromatic number is equal to either $2$ or $3$, depending on the parity of the number of vertices, see \Cref{BasicChromaticNumbersProp} (a). Up to isomorphism and taking Alexander dual, we have only five simplicial complexes on [3] shown in \Cref{fig:1complexes}.
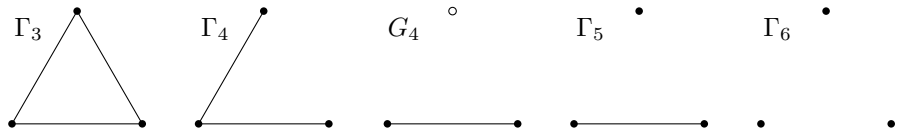
\begin{figure}[ht]
\begin{tikzpicture}
\path 
  coordinate (A) at (210:1)
  coordinate (B) at (-30:1)
  coordinate (C) at (90:1);
\draw (A) -- (B) -- (C) -- cycle;
\fill[black] (A) circle (0.05)
             (B) circle (0.05)
             (C) circle (0.05);
\draw (130:1) node{$\Gamma_3$};              
\end{tikzpicture}
\hskip5mm
\begin{tikzpicture}
\draw (B) -- (A) -- (C);
\fill[black] (A) circle (0.05)
             (B) circle (0.05)
             (C) circle (0.05);
\draw (130:1) node{$\Gamma_4$}; 
\end{tikzpicture}
\hskip5mm
\begin{tikzpicture}
\draw (B) -- (A);
\fill[black] (A) circle (0.05)
             (B) circle (0.05);
\draw[black] (C) circle (0.05);
\draw (130:1) node{$G_4$}; 
\end{tikzpicture}
\hskip5mm
\begin{tikzpicture}
\draw (B) -- (A);
\fill[black] (A) circle (0.05)
             (B) circle (0.05)
             (C) circle (0.05);
\draw (130:1) node{$\Gamma_5$}; 
\end{tikzpicture}
\hskip5mm
\begin{tikzpicture}
\fill[black] (A) circle (0.05)
             (B) circle (0.05)
             (C) circle (0.05);
\draw (130:1) node{$\Gamma_6$}; 
\end{tikzpicture}
\caption{The complexes $G_4$ and $\Gamma_6$ are self-dual, $G_4$ has a ghost vertex. The indices in the notation of a complex indicate the number of vertices in the corresponding Bier sphere.}\label{fig:1complexes}
\end{figure}
The chromatic number of Bier spheres $\Bier(\Gamma_4)$, $\Bier(G_4)$, and $\Bier(\Gamma_6)$ is 2, since it is $Z_4$ in the first two cases and $Z_6$ in the last case. The chromatic number of Bier spheres $\Bier(\Gamma_3)$ and $\Bier(\Gamma_5)$ is 3, since they are isomorphic to $Z_3$ and $Z_5$, respectively. 
This motivates us to compute the chromatic number of an arbitrary Bier sphere: it is done in the next section.

%%%%%%%%%%%%%%%%%%%%%%%%%
\section{Bier spheres and their chromatic numbers}
%%%%%%%%%%%%%%%%%%%%%%%%%%

In this section, we show that every Bier sphere $\Bier(K)$ of a simplicial complex $K$ on $[m]$ with $m\geq 2$ has the chromatic number equal to either $m-1$, or $m$ and describe all the simplicial complexes $K$, for which $\chi(\Bier(K))=m-1$. The discussion of chromatic numbers of 1-dimensional Bier spheres in the end of the previous section as well as \Cref{ex:simplex} will serve as the first step in the desired description.

\begin{proposition}\label{ChromaticNumberBasicEstimateProp}
Let $K$ be a simplicial complex on $[m]$. Then
$$
\max(m-1, \chi(K), \chi(K^\vee))\leq \chi(\Bier(K))\leq m.
$$
\end{proposition}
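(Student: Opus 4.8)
The plan is to read off the structure of the $1$-skeleton $\sk^1(\Bier(K))$ directly from the deleted-join description of the faces. A vertex of $\Bier(K)$ is a singleton $\{i\}$ with $i\in V(K)$ or a singleton $\{j'\}$ with $j'\in V(K^\vee)$, and the edges fall into three types: a pair $\{i,j\}\subseteq[m]$ is an edge if and only if it lies in $K$; a pair $\{i',j'\}\subseteq[m']$ is an edge if and only if it lies in $K^\vee$; and a mixed pair $\{i,j'\}$ (with both endpoints vertices) is an edge if and only if $i\neq j$, since the deleted-join condition $I\cap J=\varnothing$ forbids exactly the pairs $\{i,i'\}$. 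In particular, the full subcomplexes of $\Bier(K)$ on $[m]$ and on $[m']$ are precisely $K$ and $K^\vee$.

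For the upper bound I would exhibit an explicit proper coloring with color set $[m]$: assign to each vertex $\{i\}$ and each vertex $\{i'\}$ the common color $i$. Each of the three edge types is then properly colored, since an edge inside $K$ or inside $K^\vee$ joins two vertices with distinct labels, and a mixed edge $\{i,j'\}$ exists only when $i\neq j$. Hence at most $m$ colors suffice and $\chi(\Bier(K))\leq m$. For the lower bounds coming from $K$ and $K^\vee$, I would use that a proper coloring restricts to a proper coloring of any full subcomplex; applying this to the full subcomplexes $K$ on $[m]$ and $K^\vee$ on $[m']$ gives $\chi(\Bier(K))\geq\chi(K)$ and $\chi(\Bier(K))\geq\chi(K^\vee)$.

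The main point, and the step I expect to require the most care, is the bound $\chi(\Bier(K))\geq m-1$. Here the idea is to produce a clique of size $m-1$ by exhibiting a face of $\Bier(K)$ on $m-1$ vertices, since the vertices of any simplex are pairwise adjacent in the $1$-skeleton. As $K\neq\Delta_{[m]}$, I would choose a minimal non-face $M\in\MF(K)$ and an element $a\in M$, and set $I:=M\setminus\{a\}$ and $J:=[m]\setminus M$. Then $I\in K$ by minimality of $M$, while $J'=[m']\setminus M'$ is a facet of $K^\vee$ by the very definition of the Alexander dual; moreover $I\cap J=\varnothing$, so $I\sqcup J'\in\Bier(K)$, and this face has $|I|+|J|=(|M|-1)+(m-|M|)=m-1$ vertices. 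These $m-1$ vertices form a clique, whence $\chi(\Bier(K))\geq m-1$. (Equivalently, one may simply invoke that $\Bier(K)$ is a pure $(m-2)$-dimensional sphere, so every facet already supplies such a clique of $m-1$ vertices.)

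Combining the three lower bounds $m-1$, $\chi(K)$, $\chi(K^\vee)$ with the upper bound $m$ then yields the stated chain of inequalities. The only genuinely nontrivial ingredient is the explicit top-dimensional face built from a minimal non-face; the remaining inequalities reduce to the elementary observations about restriction to full subcomplexes and the single two-coloring-per-pair construction, so I do not anticipate any delicate estimates beyond verifying the three edge types.
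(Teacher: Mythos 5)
Your proof is correct and follows essentially the same route as the paper: the explicit coloring $i,i'\mapsto c_i$ for the upper bound, restriction to the full subcomplexes $K$ and $K^\vee$ for two of the lower bounds, and a top-dimensional face giving an $(m-1)$-clique for the bound $\chi(\Bier(K))\geq m-1$. The only difference is that the paper simply invokes $\dim(\Bier(K))=m-2$ at this last step, whereas you construct the face $I\sqcup J'$ explicitly from a minimal non-face; this is a correct (and slightly more self-contained) justification of the same fact.
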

\begin{proof}
Since $\dim(\Bier(K))=m-2$, the chromatic number $\chi(\Bier(K))\geq m-1$. On the other hand, the map
$$
i, i'\mapsto c_i\text{ for }1\leq i\leq m,
$$
yields a proper coloring of the vertex set of $\Bier(K)$ by the set of colors $C=\{c_1,\ldots,c_m\}$, hence $\chi(\Bier(K))\leq m$.

Finally, since both $K$ and $K^\vee$ are full subcomplexes in $\Bier(K)$ by definition of a Bier sphere, one has
$$
\chi(\Bier(K))\geq \chi(K), \chi(K^\vee),
$$
which finishes the proof.
\end{proof}

Next, we are going to introduce two classes of simplicial complexes that contain the classes of cones and suspensions, see \Cref{ConeSuspensionDef}, respectively.

\begin{definition}
We say that a graph $\Gamma$ with the vertex set $V$ is 
\begin{itemize}
\item a \emph{cone graph} if there exists $a\in V$ such that: 
$$
\link_{\Gamma}(a)=V\setminus \{a\}.
$$
We call a simplicial complex $K$ a \emph{weak cone} if its 1-skeleton $\sk^1(K)$ is a cone graph;
\item a \emph{suspension graph} if there exist distinct $a, b\in V$ such that: 
$$
\link_{\Gamma}(a)=V\setminus \{a,b\}=\link_{\Gamma}(b). 
$$
We call a simplicial complex $K$ a \emph{weak suspension} if its 1-skeleton $\sk^1(K)$ is a suspension graph.   
\end{itemize}
\end{definition}

\begin{example}
Any geometric realization of a simplicial sphere is a topological suspension. On the other hand, the boundary of an octahedron is a suspension and, in particular, a weak suspension with respect to each pair of its opposite vertices, whereas both the boundaries of a tetrahedron and an icosahedron are not even weak suspensions, see \Cref{fig:platonic}.    
\end{example}

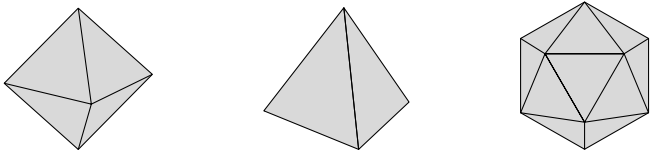
\begin{figure}[ht]
\tdplotsetmaincoords{70}{80}
\begin{tikzpicture}[tdplot_main_coords,line join=round]
\draw[dashed] (-1,0,0)--(0, 0, 1)
              (-1,0,0)--(0, 0, -1)
              (-1,0,0)--(0, 1, 0)
              (-1,0,0)--(0, -1, 0);
\fill[gray!30,opacity=0.5] (0, 1, 0)--(0, 0, 1)--(0,-1,0)--(0, 0, -1)--cycle;
\draw (1,0,0)--(0, 0, 1)
      (1,0,0)--(0, 0, -1)
      (1,0,0)--(0, 1, 0)
      (1,0,0)--(0, -1, 0);
\draw (0, 1, 0)--(0, 0, 1)--(0,-1,0)--(0, 0, -1)--cycle;      
\end{tikzpicture}
\hskip1cm
\tdplotsetmaincoords{70}{80}
\begin{tikzpicture}[tdplot_main_coords,line join=round,scale=1.2]
\pgfmathsetmacro\a{sqrt(2/9)}
\pgfmathsetmacro\b{sqrt(2/3)}
\pgfmathsetmacro\c{1/3}
\draw[dashed] (-\a, \b, -\c)--(-\a, -\b, -\c);
\fill[gray!30,opacity=0.5] (2*\a, 0, -\c)--(-\a, \b, -\c)--(0,0,1)--(-\a, -\b, -\c)--cycle;
\draw (2*\a, 0, -\c)--(0,0,1)--(-\a, -\b, -\c)--cycle;
\draw (2*\a, 0, -\c)--(-\a, \b, -\c)--(0,0,1)--cycle;
\end{tikzpicture}
\hskip1cm
\tdplotsetmaincoords{70}{90}
\begin{tikzpicture}[tdplot_main_coords,line join=round]
    \pgfmathsetmacro\a{sqrt(5-sqrt(5))/sqrt(10)}
    \pgfmathsetmacro{\phi}{\a*(1+sqrt(5))/2}
    \path 
    coordinate(A) at (0,\phi,\a)
    coordinate(B) at (0,\phi,-\a)
    coordinate(C) at (0,-\phi,\a)
    coordinate(D) at (0,-\phi,-\a)
    coordinate(E) at (\a,0,\phi)
    coordinate(F) at (\a,0,-\phi)
    coordinate(G) at (-\a,0,\phi)
    coordinate(H) at (-\a,0,-\phi)
    coordinate(I) at (\phi,\a,0)
    coordinate(J) at (\phi,-\a,0)
    coordinate(K) at (-\phi,\a,0)
    coordinate(L) at (-\phi,-\a,0); 
    \draw[dashed] (A) -- (E) -- (C)
    (B) -- (I) -- (J) -- (D)
    (C) -- (J) -- (E) -- (I) -- (A)
    (J) -- (F) -- (I)
    (G) -- (E);
    \fill[gray!30,opacity=0.5] (A) -- (B) -- (F) -- (D) -- (C) -- (G) -- cycle;
    \draw (B) -- (H) -- (F) 
    (D) -- (L) -- (H) --cycle 
    (K) -- (L) -- (H) --cycle
    (K) -- (L) -- (G) --cycle
    (C) -- (L) (B)--(K) (A)--(K);
    \draw (A) -- (B) -- (F) -- (D) -- (C) -- (G) -- cycle; 
\end{tikzpicture} 
\caption{The boundary of an octahedron is a suspension and therefore a weak suspension, but the boundaries of a tetrahedron and an icosahedron are not even weak suspensions.}\label{fig:platonic}
\end{figure}

Obviously, any cone over a simplicial complex is a weak cone complex, and any suspension over a simplicial complex is a weak suspension complex. The opposite implications fail; in the next example, we introduce a family of simplicial spheres, one sphere for each $m\geq 5$, such that each member of the family is a weak suspension, but not a suspension.

\begin{example}
Let $m \geq 5$ and consider the following simplicial complex $K_m$ on the vertex set $[m]$:
$$
\MF(K_m)=\{\{1,2,\ldots,m-2\},\{2,3,\ldots,m-1\},\{2,m\},\ldots,\{m-1,m\}\}.
$$
By definition, its Alexander dual complex $K_m^\vee$ is a simplicial complex on the vertex set $[m']$ given by
$$
\M(K_m^\vee)=\{\{1',m'\},\{(m-1)',m'\},[(m-1)']\setminus\{i'\}\,|\,2\leq i\leq m-1\}.
$$
Thus, $K_m^\vee$ is a union of two edges and the cone with apex $1'$ over the standard triangulation of $\partial\Delta_{\{2',\ldots,(m-1)'\}}$.
This implies that the Bier sphere $\Bier(K_m)$ is a weak suspension with respect to the unique vertex pair $\{1,1'\}$. On the other hand, neither $K_m$ nor $K_m^\vee$ is a cone, so $\Bier(K_m)$ is not a suspension.
\end{example}

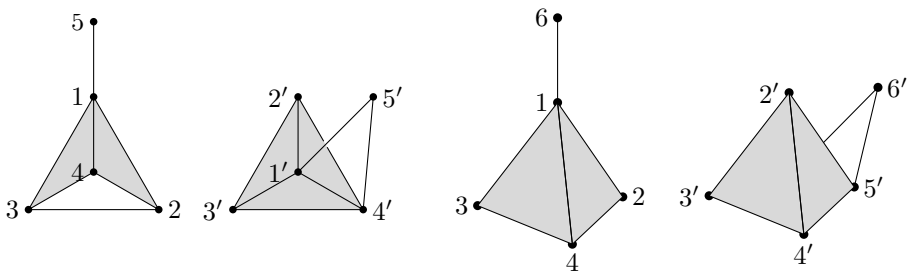
\begin{figure}[ht]
\begin{tikzpicture}
\clip (-1.3,-1.43) rectangle (1.15,2.1);
\fill[gray!30,opacity=0.5] (0,1)--(210:1)--(0,0)--(330:1)--cycle;
\fill (0,1) node[left]{1} circle (0.05)
      (330:1)node[right]{2} circle (0.05)
      (210:1)node[left]{3} circle (0.05)
      (0,0) node[left]{4} circle (0.05)
      (0,2) node[left]{5} circle (0.05);
\draw (0,0)--(0,2);
\draw (210:1)--(330:1)
      (0,1)--(210:1)--(0,0)--(330:1)--cycle;
\end{tikzpicture}
\hskip2mm
\begin{tikzpicture}
\clip (-1.25,-1.43) rectangle (1.4,2.1);
\fill[gray!30,opacity=0.5] (0,1)--(210:1)--(330:1)--cycle;
\fill (0,1) node[left]{$2'$} circle (0.05)
      (330:1)node[right]{$4'$} circle (0.05)
      (210:1)node[left]{$3'$} circle (0.05)
      (0,0) node[left]{$1'$} circle (0.05)
      (1,1) node[right]{$5'$} circle (0.05);
\draw (0,0)--(0,1)
      (210:1)--(330:1)
      (0,1)--(210:1)--(0,0)--(330:1)--(1,1)--(0,0);
\path (330:1)--(0,1) node [inner sep=0pt,pos=.57] (A) {};      
\path (330:1)--(0,1) node [inner sep=0pt,pos=.59] (B) {};
\draw (330:1) --(A) (B)--(0,1);
\end{tikzpicture}
\hskip1mm
\tdplotsetmaincoords{70}{80}
\begin{tikzpicture}[tdplot_main_coords,line join=round,scale=1.2]
\pgfmathsetmacro\a{sqrt(2/9)}
\pgfmathsetmacro\b{sqrt(2/3)}
\pgfmathsetmacro\c{1/3}
\node(T1) at (0,0,1) {};
\node(T2) at (-\a, \b, -\c) {};
\node(T3) at (-\a, -\b, -\c) {};
\node(T4) at (2*\a, 0, -\c) {};
\node(T5) at (0,0,0) {};
\node(T6) at (0,0,2) {};
\fill[tdplot_screen_coords] 
  (T1)node[left]{1} circle [radius=0.05]
  (T2)node[right]{2} circle [radius=0.05]
  (T3)node[left]{3} circle [radius=0.05]
  (T4)node[below]{4} circle [radius=0.05]
  (T5)node[left]{5} circle [radius=0.05]
  (T6)node[left]{6} circle [radius=0.05];
\draw[dashed] (-\a, \b, -\c)--(-\a, -\b, -\c)
              (0,0,0)--(-\a, \b, -\c)
              (0,0,0)--(-\a, -\b, -\c)
              (0,0,0)--(2*\a, 0, -\c)
              (0,0,0)--(0,0,1);
\fill[gray!30,opacity=0.5] (2*\a, 0, -\c)--(-\a, \b, -\c)--(0,0,1)--(-\a, -\b, -\c)--cycle;
\draw (2*\a, 0, -\c)--(0,0,1)--(-\a, -\b, -\c)--cycle;
\draw (2*\a, 0, -\c)--(-\a, \b, -\c)--(0,0,1)--cycle;
\draw (0,0,1)--(0,0,2);
\end{tikzpicture}
\hskip1mm
\begin{tikzpicture}[tdplot_main_coords,line join=round,scale=1.2]
\pgfmathsetmacro\a{sqrt(2/9)}
\pgfmathsetmacro\b{sqrt(2/3)}
\pgfmathsetmacro\c{1/3}
\node(T0) at (0,0,-1.1) {};
\node(T2) at (0,0,1) {};
\node(T5) at (-\a, \b, -\c) {};
\node(T3) at (-\a, -\b, -\c) {};
\node(T4) at (2*\a, 0, -\c) {};
\node(T1) at (0,0,0) {};
\node(T6) at (0,1,1) {};
\fill[tdplot_screen_coords] 
  (T1)node[left]{$1'$} circle [radius=0.05]
  (T2)node[left]{$2'$} circle [radius=0.05]
  (T3)node[left]{$3'$} circle [radius=0.05]
  (T4)node[below]{$4'$} circle [radius=0.05]
  (T5)node[right]{$5'$} circle [radius=0.05]
  (T6)node[right]{$6'$} circle [radius=0.05];
\draw[dashed] (-\a, \b, -\c)--(-\a, -\b, -\c)
              (0,0,0)--(-\a, \b, -\c)
              (0,0,0)--(-\a, -\b, -\c)
              (0,0,0)--(2*\a, 0, -\c)
              (0,0,0)--(0,0,1);
\draw (T1.center)--(T6.center)--(T5.center);              
\fill[gray!30,opacity=0.5] (2*\a, 0, -\c)--(-\a, \b, -\c)--(0,0,1)--(-\a, -\b, -\c)--cycle;
\draw (T4.center)--(T2.center)--(T3.center)--cycle;
\draw (T4.center)--(T5.center)--(T2.center)--cycle;
\end{tikzpicture}
\caption{On the left it is the simplicial complex $K_5$ and its Alexander dual $K_5^\vee$. On the right it is the simplicial complex $K_6$ (there is no face $\{2,3,4,5\}$) and its Alexander dual $K_6^\vee$.}
\end{figure}

\begin{proposition}
Let $m\geq 5$. Then both $K_m$ and $K_m^\vee$ are weak cones, and one has: 
$$
\chi(K_m)=\chi(K_m^\vee)=m-1.
$$
On the other hand, $\chi(\Bier(K_m))=m$.
\end{proposition}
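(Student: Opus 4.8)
The plan is to reduce everything to the combinatorics of the three $1$-skeletons involved, $\sk^1(K_m)$, $\sk^1(K_m^\vee)$ and $\sk^1(\Bier(K_m))$, and then read off the cone and chromatic data directly from their edge sets. I would begin by describing $\sk^1(K_m)$: since the only minimal non-faces of cardinality at most $2$ are the pairs $\{j,m\}$ with $2\le j\le m-1$, every other pair of vertices is an edge. In particular vertex $1$ is joined to all the others, so $\link_{\sk^1(K_m)}(1)=[m]\setminus\{1\}$ and $K_m$ is a weak cone, and $\{1,\ldots,m-1\}$ is a clique of size $m-1$. Dually, from the description $K_m^\vee=\cone_{1'}(\partial\Delta_{\{2',\ldots,(m-1)'\}})\cup\{1',m'\}\cup\{(m-1)',m'\}$ I would check that every pair inside $\{1',\ldots,(m-1)'\}$ lies in some facet $F_i=[(m-1)']\setminus\{i'\}$; this is exactly where $m\ge 5$ enters, since $|\{2,\ldots,m-1\}|\ge 3$ guarantees an admissible index $i$ distinct from the two endpoints. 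Hence this set is again a clique of size $m-1$, and $1'$ is adjacent to everything, giving the weak cone structure with apex $1'$.

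Next I would compute the two chromatic numbers. Both lower bounds $\chi(K_m),\chi(K_m^\vee)\ge m-1$ follow at once from the cliques of size $m-1$ just exhibited. For the matching upper bounds I would give explicit colorings: color $1,\ldots,m-1$ (resp.\ $1',\ldots,(m-1)'$) with $m-1$ distinct colors, and then color the single remaining vertex $m$ (resp.\ $m'$), which is adjacent only to $1$ (resp.\ only to $1'$ and $(m-1)'$); since $m-1\ge 4$ a free color is available in each case. This yields $\chi(K_m)=\chi(K_m^\vee)=m-1$.

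The crux is the claim $\chi(\Bier(K_m))=m$. The upper bound $\le m$ and the lower bound $\ge m-1$ are immediate from \Cref{ChromaticNumberBasicEstimateProp} together with the equalities just proved, so it suffices to exclude a proper coloring with $m-1$ colors. The key structural input is the edge set of $\sk^1(\Bier(K_m))$: edges inside $[m]$ are exactly those of $K_m$, edges inside $[m']$ are exactly those of $K_m^\vee$, and a mixed pair $\{i,j'\}$ is an edge precisely when $i\ne j$. I would then argue by contradiction: in any $(m-1)$-coloring the clique $\{1',\ldots,(m-1)'\}$ must use all $m-1$ colors, while the vertex $m$ is mixed-adjacent to every one of $1',\ldots,(m-1)'$ (as $m\ne j$ for all $j\le m-1$); hence $c(m)$ would have to avoid all $m-1$ colors, which is impossible.

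The main obstacle is not the final contradiction, which is short, but getting the three edge sets exactly right — in particular pinning down the mixed edges $\{i,j'\}$ and verifying that $\{1',\ldots,(m-1)'\}$ really is a complete graph, which again relies on $m\ge 5$. Once the $1$-skeleton of $\Bier(K_m)$ is correctly identified, the presence of a vertex adjacent to a rainbow $(m-1)$-clique forces the chromatic number up to $m$, while the two weak-cone statements and the values $\chi(K_m)=\chi(K_m^\vee)=m-1$ all fall out of the same edge analysis.
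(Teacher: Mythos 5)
Your proof is correct and follows essentially the same route as the paper's: both arguments identify the complete graphs on $[m-1]$ in $K_m$ and on $[(m-1)']$ in $K_m^\vee$ (using $m\ge 5$ for the latter), give the explicit $(m-1)$-coloring in which $m$ and $m'$ reuse an available color, and then force $\chi(\Bier(K_m))=m$ by exhibiting an $m$-clique obtained by adjoining a primed-unprimed ``mixed'' vertex to one of these $(m-1)$-cliques. The only cosmetic difference is that the paper exhibits two such $m$-cliques and you use one, which suffices.
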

\begin{proof}
Consider the following coloring $c\colon[m]\sqcup[m']\to C(m)=\{c_1,\ldots,c_{m-1}\}$ defined by
$$
c(i)=c(i')= c_i\text{ for }1\leq i\leq m-1\text{ and }c(m)=c(m')= c_2.
$$
It is easy to see that $C(m)$ provides a proper coloring of both $K_m$ and $K_m^\vee$ with $(m-1)$ colors, and so
$$
\chi(K_m), \chi(K_m^\vee)\leq m-1.
$$ 

On the other hand, the 1-skeleta of the full subcomplexes $K_{m}\cap2^{[m-1]}$ and $K^\vee_{m}\cap2^{[(m-1)']}$ are complete graphs with $(m-1)$ vertices, and therefore 
$$
\chi(K_m), \chi(K_m^\vee)\geq m-1.
$$
Combining the last two inequalities above yields the first statement. The second statement follows from \Cref{ChromaticNumberBasicEstimateProp} and the observation that the 1-skeleta of the full subcomplexes 
$$
\Bier(K_{m})\cap 2^{[m-1]\cup\{m'\}}\text{ and } \Bier(K_{m})\cap 2^{[(m-1)']\cup\{m\}}
$$ 
are complete graphs with $m$ vertices.
\end{proof}

Skeleta of a simplex never yield a Bier sphere, which is a suspension, as shown in the following example. In the case of the 0-skeleton of the $m$-simplex and its Alexander dual, which is the $(m-3)$-skeleton of the $m$-simplex, the Bier sphere is not even a weak suspension, since it does not contain both edges $\{i,j\}$ and $\{i',j'\}$ for each pair of distinct vertices $i,j\in [m]$. The next example shows that in all other cases, the Bier sphere of a skeleton of a simplex is a weak suspension and not a suspension.

\begin{example}
Let $m\ge 5$, $1\le k\le m-4$, and $K=\Delta_{[m]}^{(k)}$, i.e., the set of maximal simplices is $\M(K)=\{I\subset[m]\,|\, |I|=k+1\}$. 
Then $I'$ is a minimal non-face in $K^\vee$ if and only if $|I|=m-k-1$, hence $K^\vee=\Delta_{[m']}^{(m-k-3)}$.
For every $i,j\in[m]$, $i\ne j$, we have $\{i,j'\}$, $\{i',j\}\in\Bier(K)$, since there are no ghost vertices in $K$ and $K^\vee$, $\{i,j\}\in K\subset \Bier(K)$, since $k\ge 1$, and $\{i',j'\}\in K^\vee\subset\Bier(K)$, since $m-k-3\ge 1$. Therefore $\Bier(K)$ is a weak suspension for every pair $\{i,i'\}$.
For every $i\in[m]$ we can write $[m]\setminus\{i\}=I\sqcup J$, where $|I|=k+1$ and $|J|=m-k-2$, hence $I\sqcup J'$ is a maximal simplex in $\Bier(K)$, so $\Bier(K)$ is not a suspension with a vertex pair $\{i,i'\}$ and therefore it is not a suspension. Finally, all three chromatic numbers $\chi(K)$, $\chi(K^\vee)$, and $\chi(\Bier(K))$ are equal to $m$, since the complete graph on $m$ vertices is a subcomplex of all three complexes.
\end{example}

The next result is formally a generalization of \Cref{BasicChromaticNumbersProp} (b) and (c). However, in fact, the two statements are equivalent since the chromatic number of a simplicial complex depends only on its 1-skeleton, so we omit the proof.

\begin{lemma}\label{ChromNumOfWeakSuspensionLemma}
Suppose a simplicial complex $L$ is a weak suspension or a weak cone over a simplicial complex $K$. Then we have:
$$
\chi(L)=\chi(K)+1.
$$
\end{lemma}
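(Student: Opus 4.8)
The plan is to push everything down to the level of $1$-skeleta, since by definition $\chi$ depends only on $\sk^1$, and then to argue directly with graph colourings. Write $\Gamma:=\sk^1(L)$ and $\Gamma_0:=\sk^1(K)$. In the weak cone case $\Gamma$ is a cone graph with some apex $a$ and $\Gamma_0$ is the full subgraph of $\Gamma$ on $V(\Gamma)\setminus\{a\}$; in the weak suspension case $\Gamma$ is a suspension graph with some vertex pair $\{a,b\}$ and $\Gamma_0$ is the full subgraph on $V(\Gamma)\setminus\{a,b\}$. In either situation $\chi(L)=\chi(\Gamma)$ and $\chi(K)=\chi(\Gamma_0)$, so it suffices to prove $\chi(\Gamma)=\chi(\Gamma_0)+1$, which I would do by establishing the two opposite inequalities.

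For the weak cone case I would first prove the upper bound: take an optimal proper colouring of $\Gamma_0$ using $\chi(\Gamma_0)$ colours and extend it by assigning $a$ a brand-new colour. Since $\link_\Gamma(a)=V(\Gamma)\setminus\{a\}$, the vertex $a$ is adjacent to every other vertex, and as it receives a colour used nowhere else the extension is proper, giving $\chi(\Gamma)\leq\chi(\Gamma_0)+1$. For the lower bound, take an optimal proper colouring of $\Gamma$; again because $a$ is adjacent to all other vertices, its colour differs from every colour occurring on $\Gamma_0$, so restricting the colouring to $\Gamma_0$ yields a proper colouring that uses at most $\chi(\Gamma)-1$ colours, whence $\chi(\Gamma_0)\leq\chi(\Gamma)-1$. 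The two inequalities combine to give the claim.

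The weak suspension case follows the same template, the one genuinely new ingredient being that the two apices are non-adjacent: by the definition of a suspension graph one has $b\notin\link_\Gamma(a)$, so $\{a,b\}$ is not an edge of $\Gamma$. For the upper bound I would extend an optimal colouring of $\Gamma_0$ by giving \emph{both} $a$ and $b$ one and the same new colour; this is proper precisely because $a$ and $b$ are not adjacent to each other and their shared colour appears nowhere on $\Gamma_0$. For the lower bound, in any proper colouring of $\Gamma$ both $a$ and $b$ avoid every colour occurring on $\Gamma_0$ (each being joined to all of $\Gamma_0$), so deleting them leaves a proper colouring of $\Gamma_0$ with at least one fewer colour. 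Together these give $\chi(\Gamma)=\chi(\Gamma_0)+1$.

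The argument is essentially routine; the only place calling for care is the suspension case, where one must exploit the non-adjacency of the apex pair to see that they may share a single colour, so that the chromatic number rises by exactly $1$ rather than by $2$. For the lower bounds one should also note that a fresh colour genuinely vanishes upon deleting the apex vertex (or vertices), which is immediate here because those vertices are joined to everything else.
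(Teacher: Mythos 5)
Your argument is correct and is essentially the same as the paper's: extend an optimal colouring of $K$ by one fresh colour on the apex (or on both vertices of the non-adjacent apex pair) for the upper bound, and observe that in any proper colouring of $L$ the apex colour(s) cannot occur on $K$ for the lower bound. The paper states this more tersely but the content is identical, so no further comment is needed.
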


In the following lemma, we analyze vertex pairs of a Bier sphere being a weak suspension. 

\begin{lemma}\label{SuspensionStructureLemma}
Let $K\neq\Delta_{[m]}$ be a simplicial complex on $[m]$ with $m\geq 3$. Suppose $\Bier(K)$ is a weak suspension. Then there exists an $i\in [m]$ such that $\{i,i'\}$ is a vertex pair of $\Bier(K)$.
\end{lemma}
\begin{proof}
Denote by $\Gamma=\sk^1(\Bier(K))$. Assume $\{a,b\}\subseteq V(K)$ is a vertex pair of $\Bier(K)$. Then $\{a,b\}\in\MF(K)$ and $\{a'\},\{b'\}\in\MF(K^\vee)$. Hence 
$$
[m]\setminus\{a\}, [m]\setminus\{b\}\in K,\text{ and }
[m']\setminus\{a',b'\}\in K^\vee.
$$

The latter implies that $K^\vee$ is isomorphic to $\Delta_{(m-2)'}$, since $\{a'\},\{b'\}\in\MF(K^\vee)$. Then Example~\ref{ex:simplex} shows the statement is true in this case. The case where $\{a',b'\}\subseteq V(K^\vee)$ is a vertex pair of $\Bier(K)$ is similar. 
\end{proof}

\begin{proposition}\label{BierWeakSuspCriterionProp}
Let $K$ be a simplicial complex on $[m]$ and $i\in [m]$. Then $\Bier(K)$ is a weak suspension with a vertex pair $\{i,i'\}$ if and only if $K$ is a weak cone with apex $i$ and $K^\vee$ is a weak cone with apex $i'$.
\end{proposition}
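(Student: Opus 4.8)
The plan is to reduce the whole statement to a computation inside the $1$-skeleton $\Gamma:=\sk^1(\Bier(K))$, since being a weak suspension or a weak cone depends by definition only on $1$-skeleta. Write $V_1:=V(K)\subseteq[m]$ and $V_2:=V(K^\vee)\subseteq[m']$. First I would record that $V(\Bier(K))=V_1\sqcup V_2$ and then read off the edges of $\Gamma$ directly from the deleted-join definition: a pair $\{i,j\}$ with $i,j\in[m]$ is an edge iff $\{i,j\}\in K$; a pair $\{i',j'\}$ with $i',j'\in[m']$ is an edge iff $\{i',j'\}\in K^\vee$; the pair $\{i,i'\}$ is \emph{never} an edge, because $I\cap J\neq\varnothing$ excludes it from the deleted join; and a mixed pair $\{i,j'\}$ with $i\neq j$ is an edge iff $i\in V_1$ and $j'\in V_2$, with no further requirement. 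This last point is the crucial observation: mixed edges are controlled solely by membership of the endpoints in the two vertex sets, never by adjacency inside $K$ or $K^\vee$.

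Next I would fix $i$ and note that each side of the claimed equivalence already forces $i\in V_1$ and $i'\in V_2$: a weak suspension with pair $\{i,i'\}$ requires $i,i'\in V(\Bier(K))$, while a weak cone with apex $i$ (resp.\ $i'$) requires $i\in V(K)$ (resp.\ $i'\in V(K^\vee)$). So we may assume $i\in V_1$ and $i'\in V_2$ throughout. Under this assumption the edge description yields
$$
\link_\Gamma(i)=\link_{\sk^1(K)}(i)\,\sqcup\,\bigl(V_2\setminus\{i'\}\bigr),
$$
and, using the symmetry $\Bier(K)=\Bier(K^\vee)$ with the roles of $K,i$ and $K^\vee,i'$ interchanged,
$$
\link_\Gamma(i')=\bigl(V_1\setminus\{i\}\bigr)\,\sqcup\,\link_{\sk^1(K^\vee)}(i').
$$

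The final step is to feed these into the defining condition of a suspension graph, $\link_\Gamma(i)=V(\Bier(K))\setminus\{i,i'\}=\link_\Gamma(i')$, and watch the two summands decouple. Since $V(\Bier(K))\setminus\{i,i'\}=(V_1\setminus\{i\})\sqcup(V_2\setminus\{i'\})$ and one always has $\link_{\sk^1(K)}(i)\subseteq V_1\setminus\{i\}$, the mixed summands $V_2\setminus\{i'\}$ cancel and the equality $\link_\Gamma(i)=V(\Bier(K))\setminus\{i,i'\}$ becomes equivalent to $\link_{\sk^1(K)}(i)=V_1\setminus\{i\}$, i.e.\ to $K$ being a weak cone with apex $i$. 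Symmetrically, $\link_\Gamma(i')=V(\Bier(K))\setminus\{i,i'\}$ is equivalent to $K^\vee$ being a weak cone with apex $i'$. Conjoining the two equivalences proves the proposition in both directions simultaneously.

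I do not expect a genuine obstacle here; the only place demanding care is the bookkeeping of the vertex set. One must consistently use $V(\Bier(K))=V_1\sqcup V_2$ rather than the full index set $[m]\sqcup[m']$, so that ghost vertices of $K$ and of $K^\vee$ are properly excluded. If this is overlooked, both the mixed-edge count and the target set $V(\Bier(K))\setminus\{i,i'\}$ shift, and the clean cancellation of the $V_2\setminus\{i'\}$ summands in the last step fails.
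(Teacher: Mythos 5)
Your proof is correct and follows essentially the same route as the paper's: both arguments rest on the observation that a mixed pair $\{i,j'\}$ with $i\neq j$ is an edge of $\Bier(K)$ exactly when both endpoints are geometrical vertices, so that $\link_{\sk^1(\Bier(K))}(i)$ splits as $\link_{\sk^1(K)}(i)$ together with all of $V(K^\vee)\setminus\{i'\}$, reducing the weak-suspension condition to the two weak-cone conditions. Your write-up is simply a more explicit version of the paper's terse proof, with careful bookkeeping of ghost vertices.
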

\begin{proof}
If $\Bier(K)$ is a weak suspension with a vertex pair $\{i,i'\}$, then $K$ and $K^\vee$ are weak cones with apex $i$ and $i'$, respectively, since they are subcomplexes.
On the other hand, if $K$ and $K^\vee$ are weak cones with apex $i$ and $i'$, respectively, then for any geometrical vertices $j$ and $k'$ of $\Bier(K)$ with $j,k\ne i$, the edges $\{i,j\}$ and $\{i',k'\}$ are in $\Bier(K)$ and, by the Bier sphere construction, also the edges $\{i,k'\}$ and $\{i',j\}$.
\end{proof}

\begin{lemma}\label{BierSphereAsSuspensionLemma}
Let $K$ be a simplicial complex on $[m]$ and $i\in [m]$. Then the following statements are equivalent.
\begin{itemize}
    \item[(a)] The complex $K$ is a cone with an apex $i$.
    \item[(b)] The complex $K^\vee$ is a cone with an apex $i'$.
    \item[(c)] The Bier sphere $\Bier(K)$ is a suspension with a vertex pair $\{i,i'\}$.
\end{itemize}
\end{lemma}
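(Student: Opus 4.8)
The plan is to establish the cycle of implications (a)$\Leftrightarrow$(b), (a)$\wedge$(b)$\Rightarrow$(c), and (c)$\Rightarrow$(a), which together give the full equivalence. The one fact I would isolate at the outset is the standard reformulation of the cone property: a complex $L$ is a cone with apex $v$ if and only if $v$ lies in every facet of $L$, equivalently if and only if $v$ belongs to no minimal non-face of $L$. Both equivalences are immediate from the definitions: if $v$ were missing from some facet $F$, then $F\cup\{v\}$ would be a non-face and hence would contain a minimal non-face through $v$; conversely, $v$ lying in every facet forces $\sigma\cup\{v\}\in L$ for every $\sigma\in L$, which is precisely the cone condition.

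For (a)$\Leftrightarrow$(b) I would push this reformulation through the defining relation of Alexander duality, $I\in\MF(K)\Leftrightarrow[m']\setminus I'\in\M(K^\vee)$. Indeed, ``$i$ belongs to no $I\in\MF(K)$'' says exactly that $i'\in[m']\setminus I'$ for every $I\in\MF(K)$, i.e. that $i'$ lies in every facet of $K^\vee$, which by the reformulation is precisely statement (b). The only thing to watch here is the index bookkeeping with primes and complements.

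For (a)$\wedge$(b)$\Rightarrow$(c) I would set $L:=\del_{\Bier(K)}(\{i,i'\})$ and show $\Bier(K)=\partial\Delta_{\{i,i'\}}\ast L$. Since a deleted-join face $I\sqcup J'$ can never contain both $i$ and $i'$, one always has $\{i,i'\}\notin\Bier(K)$, so every face of $\Bier(K)$ already has the form $\varepsilon\sqcup\lambda$ with $\varepsilon\in\{\varnothing,\{i\},\{i'\}\}$ and $\lambda\in L$; this yields the inclusion $\Bier(K)\subseteq\partial\Delta_{\{i,i'\}}\ast L$. The substance is the reverse inclusion, which reduces to checking that for each $\lambda=I\sqcup J'\in L$ both $\lambda\cup\{i\}$ and $\lambda\cup\{i'\}$ are faces of $\Bier(K)$: here I would use (a) to get $I\cup\{i\}\in K$ and (b) to get $J'\cup\{i'\}\in K^\vee$, then verify the disjointness condition of the deleted join, noting $i\notin J$ because $i'\notin J'$ and $i\notin I$ because $\lambda\in L$. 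This disjointness/complement check is the only genuine bookkeeping in the argument and is where I expect the main, though shallow, obstacle to lie.

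Finally, for (c)$\Rightarrow$(a) I would use that $K$ is the full subcomplex $\Bier(K)\cap 2^{[m]}$, as already noted in the proof of \Cref{ChromaticNumberBasicEstimateProp}. Assuming $\Bier(K)=\partial\Delta_{\{i,i'\}}\ast L$, take any $\sigma\in K$ with $i\notin\sigma$; then $\sigma$ contains neither $i$ nor $i'$, so $\sigma\in\del_{\Bier(K)}(\{i,i'\})=L$, whence $\sigma\cup\{i\}\in\partial\Delta_{\{i,i'\}}\ast L=\Bier(K)$. As $\sigma\cup\{i\}\subseteq[m]$, it lies in $K$, and since $\sigma\in K$ was arbitrary this is exactly the cone condition of the preliminary reformulation, giving (a). By the symmetry $\Bier(K)=\Bier(K^\vee)$ one could equally restrict to $[m']$ to recover (b) directly, but this is unnecessary once (a)$\Leftrightarrow$(b) is in hand.
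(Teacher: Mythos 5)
Your proof is correct and follows essentially the same route as the paper: the same Alexander--duality observation (``$i$ lies in every facet of $K$ iff $i$ lies in no minimal non-face iff $i'$ lies in every facet of $K^\vee$'') for (a)$\Leftrightarrow$(b), and the same restriction to $K=\Bier(K)\cap 2^{[m]}$ for (c)$\Rightarrow$(a). The only difference is that for (a)$\Rightarrow$(c) the paper cites the identity $\Bier(\cone_{i}(K))=\Sigma_{\{i,i'\}}(\Bier(K))$ from an external reference, whereas you verify the decomposition $\Bier(K)=\partial\Delta_{\{i,i'\}}\ast L$ directly; your disjointness bookkeeping there is correct.
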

\begin{proof}
The equivalence (a) $\Leftrightarrow$ (b) follows from the observation that a vertex $i$ in $K$ belongs to each element in $\M(K)$ if and only if $i'$ does not belong to any element in $\MF(K^\vee)$ if and only if $i'$ belongs to each element in $\M(K^\vee)$.

The implication (a) $\Rightarrow$ (c) follows from~\cite[Lemma 3.3 (c)]{LZ}, where it was shown that 
$$
\Bier(\cone_{i}(K))=\Sigma_{\{i,i'\}}(\Bier(K)).
$$

The implication (c) $\Rightarrow$ (a) holds, since each maximal simplex in $\Bier(K)$ belonging to $2^{[m]}$ contains the vertex $i$, hence $K=\Bier(K)\cap 2^{[m]}$ is a cone with apex $i$, see \Cref{fig:Biersuspension}.
\end{proof}

The example in the following figure serves as a motivation for the next lemma.
\begin{figure}[ht]
\begin{tikzpicture}
\clip (-1.5,-1.2) rectangle (1.5,1.4);
\fill (-30:1) node[right]{$2$} circle (0.05) 
      (90:1) node[above]{$3$} circle (0.05) 
      (210:1) node[left]{$1$} circle (0.05); 
\end{tikzpicture}
\hskip5mm
\begin{tikzpicture}
\clip (-1.5,-1.6) rectangle (1.5,1.4);
\draw (0:1)--(60:1)--(120:1)--(180:1)--(240:1)--(300:1)--cycle; 
\fill (0:1) node[right]{$1$} circle (0.05) 
      (60:1) node[right]{$2'$} circle (0.05) 
      (120:1) node[left]{$3$} circle (0.05)
      (180:1) node[left]{$1'$} circle (0.05) 
      (240:1) node[left]{$2$} circle (0.05) 
      (300:1) node[right]{$3'$} circle (0.05); 
\end{tikzpicture}
\hskip2cm
\begin{tikzpicture}
\clip (-1.5,-1.2) rectangle (1.5,1.4);
\draw (0,0)--(-30:1) (0,0)--(90:1) (0,0)--(210:1);
\fill (-30:1) node[right]{$2$} circle (0.05) 
      (90:1) node[above]{$3$} circle (0.05) 
      (210:1) node[left]{$1$} circle (0.05)
      (0,0) node[below]{$4$} circle (0.05); 
\end{tikzpicture}
\hskip5mm
\tdplotsetmaincoords{70}{80}
\begin{tikzpicture}[tdplot_main_coords,line join=round]
\pgfmathsetmacro\a{sqrt(3)/2}
\draw[dashed] (-0.5,\a,0)--(-1,0,0)node[below]{$1'$}--(-0.5,-\a,0)
              (-1,0,0)--(0,0,1)node[above]{$4$}
              (-1,0,0)--(0,0,-1)node[below]{$4'$}
              (-0.5,-\a,0)--(0,0,-1)
              (-0.5,\a,0)--(0,0,-1);
\fill[gray!30,opacity=0.5] (0,0,-1)--(0.5,\a,0)--(-0.5,\a,0)--(0, 0, 1)--(-0.5,-\a,0)--(0.5,-\a,0)--cycle;
\draw (0,0,-1)--(0.5,\a,0)--(-0.5,\a,0)--(0, 0, 1)--(-0.5,-\a,0)--(0.5,-\a,0)--cycle;   
\draw (-0.5,-\a,0)node[left]{$2$}--(0.5,-\a,0)node[below]{$3'$}--(1, 0, 0)node[below right]{$1$}--(0.5,\a,0)node[below]{$2'$}--(-0.5,\a,0)node[right]{$3$};     
\draw (0.5,-\a,0)--(0,0,1) (0,0,-1)--(1,0,0)--(0,0,1) (0.5,\a,0)--(0,0,1);
\end{tikzpicture}
\caption{On the left is the graph $\Gamma_6$, which is self-dual, and its Bier sphere, which is the $6$-cycle. On the right is a cone over $\Gamma_6$, which is also self-dual, and its Bier sphere, which is a suspension over $\Bier(\Gamma_6)$.}\label{fig:Biersuspension}
\end{figure}
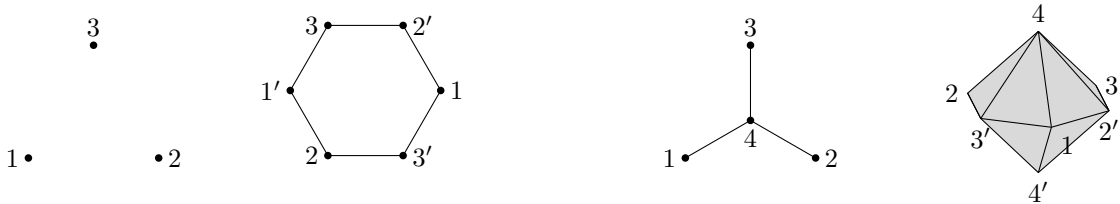

\begin{lemma}\label{BierAsSuspensionBasisLemma}
Let $K$ be a simplicial complex on $[m]$, $m\in V(K)$ and $L=\del_K(m)$. Then one has:
$$
K=\cone_{m}(L)\Longleftrightarrow \Bier(L)=\Bier(K)\cap 2^{[m-1]\sqcup [(m-1)']}.
$$ 
\end{lemma}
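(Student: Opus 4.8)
The plan is to show that each side of the claimed equivalence is, on its own, equivalent to the single combinatorial \emph{extension condition}
$$
\sigma\in K\ \Longrightarrow\ \sigma\cup\{m\}\in K\qquad\text{for every }\sigma\subseteq[m-1],
$$
after which the two-sided implication is immediate. First I would unwind the right-hand side. Writing a face of $\Bier(K)$ as $I\sqcup J'$ with $I\in K$, $J'\in K^\vee$, $I\cap J=\varnothing$, the restriction $\Bier(K)\cap 2^{[m-1]\sqcup[(m-1)']}$ keeps exactly those faces with $I\subseteq[m-1]$ and $J'\subseteq[(m-1)']$; since for $I\subseteq[m-1]$ the condition $I\in K$ is the same as $I\in L=\del_K(m)$, this restriction is the deleted join of $L$ with $K^\vee\cap 2^{[(m-1)']}$, whereas $\Bier(L)$ is the deleted join of $L$ with $L^\vee$. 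Both deleted joins have the same first factor $L$, so testing on the faces $\varnothing\sqcup J'$ (which always satisfy the disjointness condition, as $I=\varnothing$) shows the two complexes coincide if and only if their second factors agree, i.e.
$$
\Bier(L)=\Bier(K)\cap 2^{[m-1]\sqcup[(m-1)']}\ \Longleftrightarrow\ L^\vee=K^\vee\cap 2^{[(m-1)']}.
$$

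Next I would translate this dual equation into a condition on $K$ via the complement description of the Alexander dual, $S'\in K^\vee\Leftrightarrow[m]\setminus S\notin K$ (and likewise for $L$ on $[m-1]$), which follows directly from the $\MF/\M$ definition. Setting $T=[m-1]\setminus J$ and using $L=K\cap 2^{[m-1]}$, one gets $J'\in L^\vee\Leftrightarrow T\notin K$ and $J'\in K^\vee\cap 2^{[(m-1)']}\Leftrightarrow T\cup\{m\}\notin K$. Since downward closure always yields $T\cup\{m\}\in K\Rightarrow T\in K$, these two memberships agree for all $T\subseteq[m-1]$ precisely when the converse holds, namely $T\in K\Rightarrow T\cup\{m\}\in K$, which is the extension condition. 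Finally I would observe that $K\subseteq\cone_m(L)$ holds automatically (for any $\sigma\in K$ one has $\sigma\setminus\{m\}\in L$), so $K=\cone_m(L)$ reduces to the reverse inclusion $\cone_m(L)\subseteq K$, and this says exactly that $\tau\cup\{m\}\in K$ for every $\tau\in L$, i.e. the extension condition again. Chaining the equivalences closes the proof.

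The step I would be most careful about is the degenerate case $L=\Delta_{[m-1]}$ (that is, $[m-1]\in K$): here $\Bier(L)$ is not defined, since the Alexander dual requires $L\neq\Delta_{[m-1]}$, while on the other side $\cone_m(L)=\Delta_{[m]}\neq K$ forces the left-hand equation to fail; I would dispose of this separately, noting it is excluded (or vacuously handled) and that the equivalence holds in all remaining cases. Apart from this boundary case the argument is bookkeeping with complements, and the one thing to get right is the direction of the downward-closure implication, which is what singles out the extension condition as the unique nontrivial constraint shared by both sides.
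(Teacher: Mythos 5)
Your argument is correct, and it takes a genuinely different route from the paper's. The paper proves the forward implication by quoting the suspension formula $\Bier(\cone_{m}(L))=\Sigma_{\{m,m'\}}(\Bier(L))$ from an external reference, and the reverse implication by contradiction: if $K\neq\cone_{m}(L)$ there is a facet $I\in\M(K)$ with $m\notin I$, and then $[(m-1)']\setminus I'$ is a minimal non-face of $\Bier(L)$ but a face of $\Bier(K)\cap 2^{[m-1]\sqcup[(m-1)']}$ (since the actual minimal non-face $[m']\setminus I'$ of $\Bier(K)$ strictly contains it), so the two complexes differ. You instead reduce both sides of the equivalence to the single extension condition $\sigma\in K\Rightarrow\sigma\cup\{m\}\in K$ for $\sigma\subseteq[m-1]$: you identify the restriction of $\Bier(K)$ as the deleted join of $L$ with $K^\vee\cap 2^{[(m-1)']}$, detect equality of the two deleted joins on the faces $\varnothing\sqcup J'$, and use the complement description $J'\in K^\vee\Leftrightarrow[m]\setminus J\notin K$ to turn $L^\vee=K^\vee\cap 2^{[(m-1)']}$ into the extension condition; all of these steps check out. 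What your approach buys is a self-contained proof that does not lean on the cited suspension formula, lets both directions fall out of one computation, and explicitly flags the degenerate case $L=\Delta_{[m-1]}$ (where $\Bier(L)$ is undefined), which the paper passes over silently; the paper's version is shorter on the page only because it outsources the forward direction.
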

\begin{proof}
The implication $\Rightarrow$ follows directly from~\cite[Lemma 3.3 (c)]{LZ}:
$$
K=\cone_{m}(L)\Rightarrow \Bier(K)=\Sigma_{\{m,m'\}}(\Bier(L))\Rightarrow \Bier(L)=\Bier(K)\cap 2^{[m-1]\sqcup [(m-1)']}.
$$

Let us prove the implication $\Leftarrow$. Suppose $I\in\M(K)$ and $m\notin I$. Then $I\in\M(L)$, since $L$ is the deletion of $m$ from $K$. Therefore, $m'\in [m']\setminus I'\in\MF(\Bier(K))$ and hence 
$$
[(m-1)']\setminus I'\notin \MF(\Bier(K)_{[m-1]\sqcup [(m-1)']}),
$$
by the definition of a minimal non-face. On the other hand, by Alexander duality, we have:
$$
[(m-1)']\setminus I'\in \MF(\Bier(L)).
$$
Thus, $\Bier(L)$ and $\Bier(K)_{[m-1]\sqcup [(m-1)']}$ are different simplicial complexes on $[m-1]\sqcup [(m-1)']$, which contradicts our assumptions. This finishes the proof.
\end{proof}

\begin{lemma}\label{FewerRealVerticesCaseLemma}
Let $K$ be a simplicial complex on $[m]$. If either $f_0(K)$ or $f_0(K^\vee)$ is less than $m$, then 
$$
\chi(\Bier(K))=m-1
$$
if and only if one of the following combinatorial equivalences holds:
\begin{itemize}
\item $K=\Delta_{[m-1]}$;
\item $K=\Delta_{[m-2]}$;
\item $K=\Delta_{[m-1]}\cup \Delta_{[m-2]\cup\{m\}}$.
\end{itemize}
In all those cases, one has:
$$
\Bier(K)=\Bier(\Delta_{[m-3]}\ast \Gamma_4)=\Sigma^{m-3}(Z_4).
$$
\end{lemma}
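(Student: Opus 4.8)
The plan is to prove the equivalence by first disposing of the easy direction together with the closing identification, and then to extract the rigidity of $K$ from the equation $\chi(\Bier(K))=m-1$. Throughout I would use the symmetry $\Bier(K)=\Bier(K^\vee)$ together with the observation that the three listed complexes are permuted by Alexander duality (here $\Delta_{[m-1]}$ is self-dual, while $\Delta_{[m-2]}$ and $\Delta_{[m-1]}\cup\Delta_{[m-2]\cup\{m\}}$ are dual to each other, as $\big(\Delta_{[m-2]}\big)^\vee$ has exactly these two facets). Hence it suffices to treat the case $f_0(K)<m$, i.e. the case in which $K$ has at least one ghost vertex; the case $f_0(K^\vee)<m$ then follows by applying the conclusion to $K^\vee$. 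For the reverse implication and the final formula, I would note that $\Delta_{[m-3]}\ast\Gamma_4$ is an $(m-3)$-fold cone over $\Gamma_4$, so iterating the suspension formula $\Bier(\cone_i(L))=\Sigma_{\{i,i'\}}(\Bier(L))$ used in \Cref{BierSphereAsSuspensionLemma}, together with the already-noted identity $\Bier(\Gamma_4)=Z_4$, gives $\Bier(\Delta_{[m-3]}\ast\Gamma_4)=\Sigma^{m-3}(Z_4)$. Then \Cref{ex:simplex} identifies both $\Bier(\Delta_{[m-1]})$ and $\Bier(\Delta_{[m-2]})$ with this same iterated suspension, and the third case reduces to the second since $\big(\Delta_{[m-1]}\cup\Delta_{[m-2]\cup\{m\}}\big)^\vee=\Delta_{[(m-2)']}$; applying \Cref{ChromNumOfWeakSuspensionLemma} $m-3$ times to $\chi(Z_4)=2$ (\Cref{BasicChromaticNumbersProp}) yields $\chi(\Bier(K))=m-1$ in all three cases.

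For the forward direction, assume $f_0(K)<m$ and $\chi(\Bier(K))=m-1$, and let $g$ be the number of ghost vertices of $K$. The starting observation is that a ghost vertex $j$ of $K$ forces, via Alexander duality, the facet $[m']\setminus\{j'\}\in\M(K^\vee)$, i.e. a clique on all primed vertices except $j'$. I would first bound $g$: if $g\ge 3$, then for any two primed vertices $a',b'$ some ghost $j$ lies outside $\{a,b\}$, so $\{a',b'\}\subseteq[m']\setminus\{j'\}\in K^\vee$; hence $\sk^1(K^\vee)$ is the complete graph $\sk^1(\Delta_{[m']})$ and $\chi(K^\vee)=m$, contradicting $\chi(K^\vee)\le\chi(\Bier(K))=m-1$ from \Cref{ChromaticNumberBasicEstimateProp}. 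Thus $g\in\{1,2\}$, and after relabelling I may assume $m$ is a ghost vertex, so that $Q=\{1',\dots,(m-1)'\}$ is an $(m-1)$-clique which must receive all $m-1$ colours in any proper $(m-1)$-colouring, and every present unprimed vertex $i\le m-1$, being adjacent to all of $Q\setminus\{i'\}$, is forced to the colour of $i'$.

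The argument then splits according to whether $m'$ is a vertex of $\Bier(K)$, equivalently whether $[m-1]\in K$. If $[m-1]\in K$, then since $m$ is a ghost every face lies in $[m-1]$, so $K=\Delta_{[m-1]}$ and we are in the first case. If $[m-1]\notin K$, then $m'$ is a vertex adjacent to every present unprimed $i\le m-1$; writing its colour as that of some $j_0'\in Q$, properness forces $j_0$ to be a ghost (otherwise $j_0$ would be a present unprimed vertex coloured like $m'$ and adjacent to it), so $g\ge 2$ and hence $g=2$. Relabelling the two ghosts as $m-1,m$, the present vertices are exactly $[m-2]$, and the facets $[(m-1)']$ and $[m']\setminus\{(m-1)'\}$ already realise every primed edge except $\{(m-1)',m'\}$. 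Were $[m-2]\notin K$, a minimal non-face $I\subseteq[m-2]$ would give a facet $[m']\setminus I'$ containing both $(m-1)'$ and $m'$, completing the primed complete graph and again forcing $\chi(K^\vee)=m$; hence $[m-2]\in K$ and $K=\Delta_{[m-2]}$, the second case.

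The step I expect to be the main obstacle is the colouring argument in the case $[m-1]\notin K$: unlike the clean $\chi(K^\vee)=m$ contradictions, ruling out this configuration requires genuinely exploiting the forced colour identifications $c(i)=c(i')$ on the present unprimed vertices in order to locate the extra ghost $j_0$, and one must check carefully that the unique colour available to $m'$ collides with a \emph{present} vertex unless a second ghost exists. Once this is in place, the two-ghost analysis together with the bound $g\le 2$ combine mechanically to pin $K$ down to one of the three listed complexes.
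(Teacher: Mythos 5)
Your proof is correct and follows essentially the same route as the paper's: reduce to $f_0(K)<m$ by Alexander duality, use the $(m-1)$-clique $[(m-1)']\in K^\vee$ to force $c(i)=c(i')$ on the present unprimed vertices, and split on whether $m'$ is a geometric vertex of $K^\vee$ (equivalently, whether $[m-1]\in K$). Your only real deviations --- bounding the number of ghost vertices by $2$ up front, and closing the two-ghost case by showing that a minimal non-face inside $[m-2]$ would complete the primed clique and force $\chi(K^\vee)=m$ --- are cleaner packagings of the paper's terser ``unique missing edge $\{j',m'\}$'' argument rather than a different method.
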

\begin{proof}
Suppose $\chi(\Bier(K))=m-1$ and let $c\colon V(\Bier(K))\to C=\{c_1,c_2,\ldots,c_{m-1}\}$ be a proper coloring. Without loss of generality, we can assume $f_0(K)<m$ and $\{m\}\notin K$. Therefore, $[(m-1)']\in K^\vee$ and $C=\{c(1'),\ldots,c((m-1)')\}$. 
By \Cref{ChromaticNumberBasicEstimateProp}, it follows that $\chi(K^\vee)=m-1$.

If $\{m'\}\in K^\vee$, then $m'$ can not be linked by an edge to each of the vertices $1',2',\ldots,(m-1)'$ and moreover:
$$
c(i)=c(i')\text{ for all }1\leq i\leq f_0(K),
$$
since $\{i,j'\}\in\Bier(K)$ for each $1\leq i\leq f_0(K)$ and each $1'\leq j'\leq m'$ with $i\neq j$. For $j=m$ this implies that 
$$
c(m')\notin\{c(1'),\ldots,c((f_0(K))')\}. 
$$
Thus, we conclude that $f_0(K)\leq m-2$, and there exists $j\in\{f_0(K)+1,\ldots,m-1\}$ such that $\{j',m'\}\not\in K^\vee$, since $\chi(K^\vee)=m-1$. Furthermore, the vertex $j'$ must be unique in this case, since having more than one such $j'$ would imply the existence of two different $(m-3)$-dimensional simplices in $K$, which contradicts the condition $f_0(K)\leq m-2$. 

Thus, we can assume that $\{(m-1)',m'\}\notin K^\vee$ and $\{j',m'\}\in K^\vee$ for all $1\leq j\leq m-2$. Hence finally $f_0(K)=m-2$ and moreover $K=\Delta_{[m-2]}$, with two ghost vertices, $m-1$ and $m$. To complete the proper $(m-1)$-coloring $c$ of $\Bier(K)$ we set:
$$
c(m')=c((m-1)').
$$    

If $\{m'\}\notin K^\vee$, then $K^\vee=\Delta_{[(m-1)']}$ and hence $K=\Delta_{[m-1]}$. In this case, obviously, $\chi(\Bier(K))=m-1$ with the proper $(m-1)$-coloring $c$ given by: 
\[
i,i'\mapsto c_i\text{ for all }1\leq i\leq m-1.
\]
This final statement follows from the definition of a Bier sphere, completing the proof of the necessity implication.
The sufficiency implication follows from \Cref{ex:simplex}.
\end{proof}

\begin{lemma}\label{MinChromaticImpliesWeakSuspLemma}
Let $K$ be a simplicial complex on $[m]$ with $m\geq 4$ and $
f_0(K)=f_0(K^\vee)=m$. Then we have:
$$
\chi(\Bier(K))=m-1\Rightarrow \Bier(K)\text{ is a weak suspension}.
$$
\end{lemma}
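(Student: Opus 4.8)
The plan is to fix a proper coloring $c\colon V(\Bier(K))\to\{c_1,\dots,c_{m-1}\}$ and extract a rigid structure on its color classes from the hypothesis $f_0(K)=f_0(K^\vee)=m$. Since there are no ghost vertices, every pair $\{i,j'\}$ with $i\neq j$ is an edge of $\Bier(K)$, so I would first record the \emph{cross condition} $c(i)\neq c(j')$ for all $i\neq j$. From this it follows that each color is of exactly one of three types: carried by a single antipodal pair $\{i,i'\}$ (equivalently $c(i)=c(i')$), or only by unprimed vertices, or only by primed vertices. Writing $D=\{i : c(i)=c(i')\}$ for the set producing antipodal colors, I would note that each facet of $\Bier(K)$, being an $(m-2)$-simplex, is \emph{rainbow} — it uses each of the $m-1$ colors exactly once — and that every facet $I\sqcup J'$ satisfies $I\sqcup J=[m]\setminus\{k\}$ for a unique omitted index $k$, containing exactly one vertex of each pair $\{l,l'\}$ with $l\neq k$ and neither of $\{k,k'\}$. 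This dictionary between rainbow facets, colors, and omitted pairs is the engine of the whole argument, and I would split into the cases $D\neq\varnothing$ and $D=\varnothing$.

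In the case $D\neq\varnothing$, I would pick $k\in D$ and observe, using the cross condition, that the color $c(k)=c(k')$ is carried \emph{only} by $\{k,k'\}$; hence every rainbow facet must contain $k$ or $k'$, i.e.\ no facet omits $k$. I would then convert this into a cone statement through the standard facet description of a Bier sphere: a facet omitting $k$ is exactly a face $\sigma\in K$ with $k\notin\sigma$ and $\sigma\cup\{k\}\notin K$, so the absence of such facets forces $K=\cone_k(\del_K(k))$. With $K$ a cone, \Cref{BierSphereAsSuspensionLemma} yields that $\Bier(K)$ is a suspension with vertex pair $\{k,k'\}$, and in particular a weak suspension, which settles this case.

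The main obstacle is the case $D=\varnothing$, which I expect to show is simply impossible for $m\geq 4$. Here all colors are one-sided, and since each unprimed color is realized only by unprimed vertices, every rainbow facet meets each unprimed color class in exactly one unprimed vertex; thus the unprimed part $I$ of every facet $I\sqcup J'$ has the \emph{same} cardinality $p_U$ (the number of unprimed colors), and symmetrically $|J|=p_P$, with $p_U+p_P=m-1$ and $p_U,p_P\geq 1$. Next I would feed each maximal face $\sigma$ of $K$ into the facet construction (choosing any $k\notin\sigma$ and $J=[m]\setminus\{k\}\setminus\sigma$) to produce a facet of $\Bier(K)$ with unprimed part $\sigma$, forcing $|\sigma|=p_U$; so $K$ is pure of dimension $p_U-1$ and, symmetrically, $K^\vee$ is pure of dimension $p_P-1$. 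Finally, since $p_U\leq m-2<m$ two unprimed vertices share a color, giving a $2$-element minimal non-face of $K$, which by Alexander duality is the complement of a facet of $K^\vee$ of size $m-2$; purity then gives $p_P=m-2$, and the mirror-image argument gives $p_U=m-2$. Then $p_U+p_P=2m-4=m-1$ forces $m=3$, contradicting $m\geq 4$, so $D=\varnothing$ cannot occur and $\Bier(K)$ is always a (weak) suspension.

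I anticipate the two delicate points to be: (i) pinning down the precise facet description of $\Bier(K)$ that turns "no facet omits $k$" into the cone condition and "$I$ has constant size" into purity with the dual dimension relation; and (ii) the bookkeeping in the case $D=\varnothing$, which is exactly where the hypotheses are used — $f_0(K)=f_0(K^\vee)=m$ guarantees that all cross-edges are present so that colors are cleanly one-sided, and $m\geq 4$ is what makes the final count $p_U+p_P=2m-4=m-1$ contradictory.
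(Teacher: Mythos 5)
Your proof is correct, and it takes a genuinely different route from the paper's. The paper argues by contradiction: assuming $\Bier(K)$ is not a weak suspension, it uses \Cref{BierWeakSuspCriterionProp} to extract a non-edge $\{1,2\}$ of $K$, deduces $c(1)=c(2)$ from the $(m-2)$-simplex $\{3',\ldots,m'\}\in K^\vee$, and then chases the coloring through several subcases until a weak-suspension pair appears anyway. You instead argue directly: the cross-edges $\{i,j'\}$, $i\neq j$ (available exactly because $f_0(K)=f_0(K^\vee)=m$) split the $m-1$ colors into antipodal, unprimed-only and primed-only types, and the observation that every facet is a rainbow $(m-1)$-set does the rest. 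Two remarks. First, your case $D\neq\varnothing$ proves more than the lemma claims: ``no facet omits index $k$'' translates, via the facet description of $\Bier(K)$ --- pairs $(I,k)$ with $I\in K$, $k\notin I$, $I\cup\{k\}\notin K$, which is standard but should be stated and justified, since the paper only records that facets have cardinality $m-1$ --- into $K=\cone_k(\del_K(k))$, so $\Bier(K)$ is an honest suspension by \Cref{BierSphereAsSuspensionLemma}; the paper reaches that stronger conclusion only later, by combining the present lemma with \Cref{ChromaticNumberWeakSuspNotSuspLemma}. Second, your elimination of the case $D=\varnothing$ (constant $|I|=p_U$ across facets, purity of $K$ and $K^\vee$, pigeonhole producing $2$-element minimal non-faces on both sides, hence $p_P=p_U=m-2$ and $m=3$) is a clean counting argument with no analogue in the paper; all steps check, and it isolates precisely where $m\geq 4$ enters. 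The trade-off is that your argument leans on the pure facet structure of $\Bier(K)$, whereas the paper's manipulates only edges and links.
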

\begin{proof}
Suppose $\Bier(K)$ is not a weak suspension and $\chi(\Bier(K))=m-1$. Let $c\colon V(\Bier(K))=[m]\sqcup [m']\to C$ be a proper coloring of $\Bier(K)$, where $|C|=m-1$.

By our assumption and \Cref{BierWeakSuspCriterionProp}, either $K$ or $K^\vee$ is not a weak cone. We can assume that $K$ is not a weak cone, and we may also assume that $\{1,2\}\notin K$. Then $\{3',4',\ldots,m'\}\in K^\vee$ and its vertices have pairwise different colors in $C$; let us define $C'=\{c(3'),\ldots, c(m')\}$. 

Since both $\{1,j'\},\{2,j'\}\in\Bier(K)$ for all $j\in\{3,\ldots,m\}$, it implies that 
$c(1),c(2)\notin C'$ and therefore $c(1)=c(2)=:\alpha$, since $|C'|=m-2$ and $|C|=m-1$. Thus, we have $C=C'\sqcup\{\alpha\}$.

Suppose either $1$ or $2$ is not an isolated vertex of $K$. Without loss of generality, we can assume that $\{1,3\}\in K$. It implies that $c(3)\in C'$. As $\{3,j'\}\in\Bier(K)$ for all $j\in\{4,\ldots,m\}$, we get that $c(3)=c(3')$.

Observe that $\{1,2'\},\{1',2\}\in\Bier(K)$ and hence $c(1'), c(2')\ne \alpha$.
It follows that there are vertices $i',j'\in\{3',4',\ldots,m'\}\in K^\vee$ such that 
$c(1')=c(i')$ and $c(2')=c(j')$. Hence, the corresponding 2-element sets are non-edges in $K^\vee$ and Alexander duality yields:
$$ 
[m]\setminus\{1,i\}, [m]\setminus\{2,j\}\in K.
$$

If $i\neq j$ the above inclusions imply that $\{2,j\},\{1,i\}\in K$, and so $c(i), c(j)\neq\alpha$. Therefore,
$$
c(i)=c(i')=c(1')\text{ and }c(j)=c(j')=c(2'),
$$
a contradiction, since $\{1',i\},\{2',j\}\in\Bier(K)$.

Thus, $i=j$ and $c(1')=c(i')=c(j')=c(2')=:\beta$ and 
$$ 
[m]\setminus\{1,i\}, [m]\setminus\{2,i\},\text{ and }[m]\setminus\{1,2\}\in K,
$$
since $\{1',i'\},\{2',i'\},\text{ and }\{1',2'\}\notin K^\vee$.

If $c(i)\neq\alpha$, then $c(i)\in C'$ and so $c(i)=c(i')$ by definition of a Bier sphere. Then $\{1',i\}\in \Bier(K)$ and $c(1')=c(i')=c(i)$, see above, a contradiction. Thus, $c(i)=\alpha$.

This implies that $\{1,i\},\{2,i\}\notin K$ and given $x$ such that $3\leq x\neq i\leq m$, we have: 
$$
x\in [m]\setminus\{1,i\}, [m]\setminus\{2,i\}, [m]\setminus\{1,2\}.
$$
It follows that $c(x)=c(x')$ for all $3\leq x\leq m, x\neq i$.

Finally, our coloring goes as follows:
\begin{align*}
c(1)&=c(2)=c(i)=\alpha\not\in C',\\
c(1')&=c(2')=c(i')=\beta\in C',\\
c(x)&=c(x')\text{ for }3\leq x\neq i\leq m.
\end{align*}

However, in such a Bier sphere, the pairs $\{j,j'\}$ for each $j\in [m]\setminus\{1,2,i\}$ can be taken as a weak suspension vertex pair and therefore $\Bier(K)$ is a weak suspension, which contradicts our initial assumption.

Now, suppose both $1$ and $2$ are isolated vertices of $K$. By Alexander duality, it follows that
$$
[m']\setminus\{1',i'\},[m']\setminus\{2',i'\}\in K^\vee\text{ for each }i\in\{3,\ldots, m\}.
$$

Recall that we also have $[m']\setminus\{1',2'\}\in K^\vee$.
Therefore, since $m\geq 4$ by the assumptions of the statement, we have 
$c(1'),c(2')\notin C'=C\setminus\{\alpha\}$, which implies that $c(1')=\alpha=c(2')$, a contradiction, as $\{1',2\},\{1,2'\}\in\Bier(K)$ and $\alpha=c(1)=c(2)$. Thus, $\Bier(K)$ is a weak suspension, which finishes the proof.
\end{proof}

\begin{lemma}\label{ChromaticNumberWeakSuspNotSuspLemma}
Let $K$ be a simplicial complex on $[m]$. Suppose $\Bier(K)$ is a weak suspension, which is not a suspension. Then we have:
$$
\chi(\Bier(K))=m.
$$
\end{lemma}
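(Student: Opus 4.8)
The plan is to prove the stronger, non-contradictory statement that $\sk^1(\Bier(K))$ contains a complete subgraph on $m$ vertices. Since $\chi(\Bier(K))\le m$ always holds by \Cref{ChromaticNumberBasicEstimateProp}, exhibiting such a clique forces $\chi(\Bier(K))\ge m$ and hence equality at once, so no inductive or extremal coloring argument is needed.

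First I would convert the hypotheses into conditions on $K$. Since $\Bier(K)$ is a weak suspension, \Cref{SuspensionStructureLemma} guarantees its vertex pair has the form $\{i,i'\}$, and after relabelling I may take $i=1$; then \Cref{BierWeakSuspCriterionProp} gives that $K$ is a weak cone with apex $1$. Because $\Bier(K)$ is not a suspension, it is in particular not a suspension with the pair $\{1,1'\}$, so \Cref{BierSphereAsSuspensionLemma} yields that $K$ is \emph{not} a cone with apex $1$. Being a weak cone means $\{1,j\}\in K$ for every vertex $j\neq 1$; being not a cone means $1$ fails to lie in some facet $F$ of $K$. Consequently $F\cup\{1\}\notin K$, so it contains a minimal non-face $N$; since every subset of $F$ is a face we have $1\in N$, and since all edges through $1$ are present, $N$ cannot be a singleton or an edge, whence $|N|\ge 3$. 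Locating a minimal non-face of cardinality at least three is the crux of the argument, and it is exactly here that the hypothesis ``weak suspension but not a suspension'' is essential: complexes whose minimal non-faces are all edges (for instance the ones realizing $Z_6$) are precisely those failing to be weak suspensions, and these are the ones whose Bier spheres can have chromatic number $m-1$.

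Finally I would assemble the clique. As $N\in\MF(K)$ with $|N|\ge 3$, every $2$-subset of $N$ is a proper subset of $N$, hence a face, so $N$ is a clique of $\sk^1(K)\subseteq\sk^1(\Bier(K))$. By Alexander duality $M':=[m']\setminus N'$ is a facet of $K^\vee$, hence a clique of $\sk^1(K^\vee)\subseteq\sk^1(\Bier(K))$ of size $m-|N|$. For the cross pairs, take $u\in N$ and $w'\in M'$; then $w\notin N$ forces $u\neq w$, while $\{u\}\in K$ (a proper subset of $N$) and $\{w'\}\in K^\vee$ (a vertex of the facet $M'$), so $\{u,w'\}\in\Bier(K)$ directly from the definition of the deleted join. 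Thus the three families of pairs—inside $N$, inside $M'$, and across—are all edges, and since $N\subseteq[m]$ and $M'\subseteq[m']$ are disjoint, $N\cup M'$ is a clique of $\sk^1(\Bier(K))$ of cardinality $|N|+(m-|N|)=m$. This gives $\chi(\Bier(K))\ge m$ and completes the proof; the only care needed is the routine bookkeeping verifying the three edge families and the disjointness that makes the count exactly $m$.
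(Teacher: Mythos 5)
Your argument is correct and is essentially the paper's proof in slightly different clothing: the paper picks a face $\sigma\in\del_K(1)$ with $\{1\}\cup\sigma\notin K$ and shows that $\{1\}\cup\sigma\cup\bigl([m']\setminus(\{1'\}\cup\sigma')\bigr)$ spans a complete graph on $m$ vertices, which for $\sigma=N\setminus\{1\}$ is exactly your clique $N\cup([m']\setminus N')$, verified by the same three families of edges. (One parenthetical aside is inaccurate---complexes all of whose minimal non-faces are edges are not ``precisely those failing to be weak suspensions,'' e.g.\ $\Bier(\Gamma_4)=Z_4$ is a weak suspension---but this remark plays no role in your argument.)
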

\begin{proof}
Due to \Cref{SuspensionStructureLemma}, we can assume that $\{1,1'\}$ is a weak suspension vertex pair for $\Bier(K)$. 
 
Since $\Bier(K)$ is not a suspension, there exists a simplex $\sigma\in \del_{K}(1)$ such that $\{1\}\cup\sigma\notin K$. Then, by definition of Alexander duality, its complement $[2',3',\ldots,m']\setminus\sigma'\in K^\vee$ and therefore, by definition of a Bier sphere, $\tau=([2',3',\ldots,m']\setminus\sigma')\cup\sigma$ is a maximal simplex in $\Bier(K)$.

By \Cref{BierWeakSuspCriterionProp}, $K$ is a weak cone with apex $1$. Hence the vertex $1$ is linked by an edge in $\Bier(K)$ to each vertex of $\sigma\in K$ and, therefore, to each vertex of $\tau$. Since $|\tau|=m-1$, it follows that the 1-skeleton of the full subcomplex of $\Bier(K)$ on the vertex set $\{1\}\cup\tau$ is a complete graph with $m$ vertices. This yields $\chi(\Bier(K))=m$, by \Cref{ChromaticNumberBasicEstimateProp}, which finishes the proof. 
\end{proof}

The next result characterizes the case when $\chi(\Bier(K))=m-1$ for a simplicial complex $K$ on $[m]$.

\begin{lemma}\label{SuspensionMinColorableLemma}
Let $K\neq\Delta_{[m]}$ be a simplicial complex on $[m]$ with $m\geq 3$.

Then $\chi(\Bier(K))=m-1$ if and only if $K$ is obtained from either $G_4$, or $\Gamma_4$, or $\Gamma_6$ in Figure~\ref{fig:1complexes} by iterative application of the following two simplicial operations:
\begin{itemize}
\item taking a cone;
\item taking Alexander dual. 
\end{itemize}
\end{lemma}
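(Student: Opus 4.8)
The plan is to prove the two implications separately, in each case by induction on $m$, treating the complexes $G_4,\Gamma_4,\Gamma_6$ on $[3]$ (for which $\chi(\Bier(\cdot))=2$, as already recorded through $Z_4,Z_4,Z_6$) as the base of the induction. Throughout I would use that both the chromatic number and the Bier construction are combinatorial invariants, so I may freely relabel vertices and speak of a complex ``on $[m-1]$'' whenever I pass to a subcomplex on $m-1$ vertices.

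For the ``if'' direction I would argue by induction on the number of operations, tracking the quantity $\chi(\Bier(K))-(\#V(K)-1)$ and showing that each operation preserves its vanishing. Taking an Alexander dual leaves both $\chi(\Bier(K))$ and $\#V(K)$ unchanged, since $\Bier(K)=\Bier(K^\vee)$ and $K^\vee$ lives on a set of the same cardinality. Taking a cone $K=\cone_i(L)$ increases $\#V(K)$ by one and, via $\Bier(\cone_i(L))=\Sigma_{\{i,i'\}}(\Bier(L))$ together with \Cref{BasicChromaticNumbersProp}(c) (equivalently \Cref{ChromNumOfWeakSuspensionLemma}), increases $\chi(\Bier(\cdot))$ by one as well. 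Since the three base complexes satisfy $\chi(\Bier(\cdot))=3-1$, every complex built from them by these operations satisfies $\chi(\Bier(K))=\#V(K)-1=m-1$.

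For the ``only if'' direction I would induct on $m\geq 3$. The base case $m=3$ is the enumeration already given: among the five complexes on $[3]$ (up to isomorphism and Alexander dual) exactly $\Gamma_4,G_4,\Gamma_6$ yield Bier spheres $Z_4,Z_4,Z_6$ with $\chi=2$, and these are precisely the base complexes (zero operations, up to one dual). For the inductive step with $m\geq 4$ and $\chi(\Bier(K))=m-1$, I would split into two cases. If $f_0(K)<m$ or $f_0(K^\vee)<m$, then \Cref{FewerRealVerticesCaseLemma} forces $K$ to be one of $\Delta_{[m-1]}$, $\Delta_{[m-2]}$, or $\Delta_{[m-1]}\cup\Delta_{[m-2]\cup\{m\}}$; here I would exhibit each as an iterated cone over a base complex, e.g. $\Delta_{[m-1]}=\cone^{m-3}(G_4)$ and $\Delta_{[m-1]}\cup\Delta_{[m-2]\cup\{m\}}=\cone^{m-3}(\Gamma_4)$, with $\Delta_{[m-2]}$ the Alexander dual of the latter (compare \Cref{ex:simplex}). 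If instead $f_0(K)=f_0(K^\vee)=m$, then \Cref{MinChromaticImpliesWeakSuspLemma} shows $\Bier(K)$ is a weak suspension, and \Cref{ChromaticNumberWeakSuspNotSuspLemma} rules out its being a non-suspension (that would force $\chi=m$), so $\Bier(K)$ is a genuine suspension. By \Cref{BierSphereAsSuspensionLemma} this means $K=\cone_i(L)$ with $L=\del_K(i)$ on $m-1$ vertices, whence $\Bier(K)=\Sigma_{\{i,i'\}}(\Bier(L))$ gives $\chi(\Bier(L))=(m-1)-1$. Since $K\neq\Delta_{[m]}$ forces $L\neq\Delta_{[m-1]}$ with $m-1\geq 3$, the induction hypothesis applies to $L$, and adjoining the final cone shows $K$ is of the claimed form.

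The main obstacle is really concentrated in the two already-established lemmas feeding Case B: \Cref{MinChromaticImpliesWeakSuspLemma}, which extracts a weak-suspension structure from the bare value of the chromatic number, and \Cref{ChromaticNumberWeakSuspNotSuspLemma}, which upgrades ``weak suspension'' to ``suspension'' by excluding $\chi=m$. Granting these, the remaining delicate point is bookkeeping rather than geometry: ensuring that the inductively produced complex $L$ genuinely satisfies the hypotheses of the lemma (nondegeneracy $L\neq\Delta_{[m-1]}$ and $m-1\geq 3$), and checking in Case A that the three sporadic complexes, together with the degenerate single-point-with-ghost-vertices complexes arising as their duals, are all accounted for as cone towers over $G_4$ and $\Gamma_4$.
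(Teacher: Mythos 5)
Your proposal is correct and follows essentially the same route as the paper: the ``if'' direction via $\Bier(\cone(L))=\Sigma(\Bier(L))$ and the invariance of $\Bier(\cdot)$ under Alexander duality, and the ``only if'' direction by induction on $m$ through \Cref{FewerRealVerticesCaseLemma}, \Cref{MinChromaticImpliesWeakSuspLemma}, \Cref{ChromaticNumberWeakSuspNotSuspLemma}, and \Cref{BierSphereAsSuspensionLemma}. The only (minor) omission is that before invoking \Cref{BierSphereAsSuspensionLemma} you need \Cref{SuspensionStructureLemma} to know the suspension vertex pair has the form $\{i,i'\}$; otherwise the bookkeeping, including the explicit identification of the three sporadic complexes of \Cref{FewerRealVerticesCaseLemma} as cone towers over $G_4$ and $\Gamma_4$ (up to one dual), matches the paper.
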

\begin{proof}
To prove the implication $\Leftarrow$, we observe that by~\cite[Lemma 3.3 (c)]{LZ} one has:
$$
\Bier(\Delta^{m-4}\ast \Gamma_4)=\Bier(\Delta^{m-4}\ast G_4)=\Sigma^{m-3}(Z_4)\text{ and }\Bier(\Delta^{m-4}\ast \Gamma_6)=\Sigma^{m-3}(Z_6).
$$
Coloring each time the vertex pair of the suspension into a new color, we get
$$
\chi(\Bier(K))\leq (m-3) + \chi(P_4)\text{ or }\chi(\Bier(K))\leq (m-3) + \chi(P_6),
$$
respectively. Both right hand sides are equal to $m-1$.
Application of \Cref{ChromaticNumberBasicEstimateProp} finishes the proof of this implication.

To prove the implication $\Rightarrow$, we argue by induction on $m$. For $m\leq 4$ the statement is true by the complete description of combinatorial Bier spheres in dimensions 1 and 2, see Example~\ref{OneDimBierExample} and~\cite[Theorem 2.16]{LS}. Suppose the statement holds for a fixed $m\geq 4$, and let us show it is true when $K$ is a simplicial complex on $[m+1]$ and $\chi(\Bier(K))=m$. Due to \Cref{FewerRealVerticesCaseLemma}, it remains to consider the case when both $K$ and $K^\vee$ have no ghost vertices: 
$$
f_0(K)=f_0(K^\vee)=m+1.
$$

Therefore, by 
\Cref{MinChromaticImpliesWeakSuspLemma}, $\Bier(K)$ is a weak suspension. Then, by \Cref{ChromaticNumberWeakSuspNotSuspLemma}, $\Bier(K)$ is a suspension.

Due to \Cref{SuspensionStructureLemma} we can assume that $\{m+1,(m+1)'\}$ is a vertex pair of $\Bier(K)$, hence, by~\Cref{BierSphereAsSuspensionLemma}, $m+1$ and $(m+1)'$ are the apexes of $K$ and $K^\vee$, respectively. Consider $R:=\del_{K}(m+1)$ as a simplicial complex on $[m]$. Then $K=\cone_{m+1}(R)$ and due to \Cref{BierAsSuspensionBasisLemma} one has: 
$$
\Bier(R)=\Bier(K)\cap 2^{[m]\cup [m']}.
$$

Since $\chi(\Bier(K))=m$ and $\Bier(K)$ is a suspension over $\Bier(R)$, by \Cref{ChromNumOfWeakSuspensionLemma}, we get:
$$
\chi(\Bier(R))=\chi(\Bier(K))-1=m-1.
$$

By the inductive assumption applied to $\Bier(R)$, we obtain that $R$ is obtained from either $G_4$, or $\Gamma_4$, or $\Gamma_6$ by iterative application of the following two simplicial operations: taking a cone and taking Alexander dual. 

It remains to recall that $K=\cone_{m+1}(R)$, which finishes the proof.
\end{proof}

In what follows, we use the notation $P_K$ for the combinatorial simple polytope such that 
$$
\Bier(K)=\partial P^*_K,
$$
when $\Bier(K)$ is a polytopal sphere. Now we are ready to introduce a complete description of chromatic numbers for Bier spheres, which summarizes the results of this section.

\begin{theorem}\label{BierChromaticTheorem}
Let $K\neq\Delta_{[m]}$ be a simplicial complex on $[m]$ with $m\geq 2$. Then 
\begin{enumerate}
\item[(a)] $m-1\leq \chi(\Bier(K))\leq m$;
\item[(b)] $\chi(\Bier(K))=m-1$ if and only if $\Bier(K)$ is polytopal and the next equivalent conditions hold:
\begin{itemize}
\item $K\in\{C^{m-3}(\Gamma_4)$, $C^{m-3}(\Gamma_4^{\vee})$, $C^{m-3}(G_4)$, $C^{m-3}(\Gamma_6)\}$;
\item $P_K\in\{I^{m-1}, I^{m-3}\times P_6\}$.
\end{itemize}
\end{enumerate}
\end{theorem}
\begin{proof}
The case $m=2$ is clear. In case $m\geq 3$, statement (a) follows directly from \Cref{ChromaticNumberBasicEstimateProp}, while statement (b) is a direct consequence of \Cref{SuspensionMinColorableLemma} and the fact that the operations of taking a cone of a simplicial complex and taking its Alexander dual commute. 
\end{proof}

%%%%%%%%%%%%%%%%%%%%%%%%%%%%%%%%%%%%%%%%%%%
\section{Buchstaber numbers and chordality of Bier spheres}
%%%%%%%%%%%%%%%%%%%%%%%%%%%%%%%%%%%%%%%%%%%

In this section, we discuss Buchstaber numbers of Bier spheres and their chordality. These combinatorial invariants and combinatorial property are closely related to certain important topological properties of polyhedral products studied in the framework of toric topology~\cite{BP02, TT}.

\begin{definition}
Let $K$ be a simplicial complex on $[m]$ with $m$ geometrical vertices. We call the {\emph{complex Buchstaber number}} of $K$ the maximal integer $r$ such that there exists a {\emph{characteristic map}}
$$
\Lambda\colon [m]\to \Z^{m-r}
$$
and we denote it by $s(K)$; that is, each simplex of $K$ is mapped to a part of a lattice basis of $\Z^{m-r}$. 

Similarly, given a prime number $p\in\N$, we call the {\emph{mod $p$ Buchstaber number}} of $K$ the maximal integer $r$ such that there exists a {\emph{mod $p$ characteristic map}}
$$
\Lambda_p\colon [m]\to \Z_p^{m-r}
$$
and we denote it by $s_{p}(K)$; that is, each simplex of $K$ is mapped to a linearly independent set of vectors in the $\Z_p$-vector space $\Z_p^{m-r}$.
\end{definition}

\begin{remark}
Note that this definition is equivalent to the one given in~\cite[Definition 5.3]{BVV}; moreover, in the case $p=2$, it is equivalent to the one of $s_\R(K)$ as the maximal rank of a real toric subgroup in $\Z_2^{m}$ acting freely on the real moment-angle-complex $\rk$ and similarly in the complex case, see~\cite{Ayz10,E09,Er14,FM}.
\end{remark}

It follows from the above definition that both $s(K)$ and $s_p(K)$, for any prime $p\in\N$, are combinatorial invariants of $K$.
Buchstaber invariant of a simplicial complex is known to be closely related to its chromatic number. First, given a simplicial complex $K$ with $f_0(K)=m$ and $\dim(K)=n-1$, the fundamental inequality $s(K)\leq m-n$ is well-known and can be found in~\cite{IZ01}. In the same paper, the Buchstaber and chromatic numbers were linked by the inequality $m-\chi(K)\leq s(K)$. Furthermore, for any simplicial complex, the real Buchstaber number bounds the complex Buchstaber number from above: $s(K)\leq s_2(K)$, see~\cite[Fact 15, p.5]{E09} as well as~\cite{Ayz10, FM, Er14}. Finally, for every prime $p\neq 2$, this is proved by literally the same argument, see~\cite{BVV}, where the mod $p$ Buchstaber invariant was introduced and studied, and therefore the chain of inequalities takes place:
$$
m-\chi(K)\leq s(K)\leq s_p(K)\leq m-n\text{ for any prime $p\in\N$}.
$$

Since $\dim(\Bier(K))=m-2$, applying these inequalities and Theorem~\ref{BierChromaticTheorem} to a Bier sphere, we get:
$$
f_0(\Bier(K))-m\leq s(\Bier(K))\leq s_p(\Bier(K))\leq f_0(\Bier(K))-(m-1)\text{ for any prime }p\in\N.
$$
Our next result shows that the last two inequalities above are, in fact, equalities for any Bier sphere. 

In~\cite[Theorem 3.4]{LS}, the real and complex Buchstaber invariants of $\Bier(K)$ were computed assuming that $K$ is considered alongside with its ghost vertices. Now, we are going to state and prove the general formulae for all Buchstaber numbers of $\Bier(K)$, when only geometrical vertices of $K$ are taken into account.

\begin{theorem}\label{BuchNumTheo}
Let $K\neq\Delta_{[m]}$ be a simplicial complex on $[m]$ with $m\geq 2$. Then we have
$$
s(\Bier(K))=s_p(\Bier(K))=f_0(K)-f_{m-2}(K)+1\text{ for any prime }p\in\N.
$$
\end{theorem}
\begin{proof}
Suppose $\Bier(K)$ is a polytopal sphere. 
Recall that by Theorem~\ref{BierChromaticTheorem} (a), for any $\Bier(K)$ with $K$ on $[m], m\geq 2$ one has either $\chi(\Bier(K))=m-1$, or $\chi(\Bier(K))=m$. Note that when $\chi(\Bier(K))=m-1=\dim(\Bier(K))+1$, the above inequalities immediately imply that the complex and all mod $p$ Buchstaber numbers of a Bier sphere are maximal possible in the first case. When $\chi(\Bier(K))=m=\dim(\Bier(K))+2$, it follows from~\cite[Theorem (7)]{Er08} that the chain of equalities and inequalities
\begin{multline*}
s(\Bier(K))\geq f_0(\Bier(K))-\chi(\Bier(K))+s(\Delta^{\chi(\Bier(K))-1}_{\dim(\Bier(K))})=\\
=f_0(\Bier(K))-m+s(\Delta^{m-1}_{m-2})\geq f_0(\Bier(K))-m+1
\end{multline*}
holds, where $\Delta_{p}^{q}$ denotes the $p$-skeleton of the $q$-dimensional simplex. This shows that the complex and all mod $p$ Buchstaber numbers of a Bier sphere are also maximal possible in the second case. Now we consider the case of a general $\Bier(K)$ and we are going to introduce a canonical characteristic map for every Bier sphere.
First, let us compute the number of geometrical vertices of $\Bier(K)$ in terms of the $f$-vector of $K$. By~\cite[Corollary 2.7]{LS}, the Bier sphere $\Bier(K)$ has $(m-f_0(K))+f_{m-2}(K)$ ghost vertices, which is just the sum of the numbers of ghost vertices in $K$ and in its Alexander dual complex $K^\vee$. Hence, the number of geometrical vertices of the Bier sphere equals 
$$
|V(\Bier(K))|=f_0(\Bier(K))=2m-(m-f_0(K)+f_{m-2}(K))=m+f_0(K)-f_{m-2}(K).
$$

Denote by $\{e_1,e_2,\ldots,e_{m-1}\}$ the standard basis of $\Z^{m-1}$ and consider the map 
$$
\phi\colon [m]\sqcup [m']\to \Z^{m-1}\text{ such that }\phi\colon i,i'\mapsto e_{i},\text{ for all }1\leq i\leq m-1,\text{ and }
\phi\colon m,m'\mapsto e_1+e_2+\ldots+e_{m-1}.
$$

Let us define $\Lambda_{(p)}$ to be the restriction of $\phi$ to $V(\Bier(K))$. Then $\Lambda_{(p)}$ is a characteristic map for $\Bier(K)$, both in the complex and mod $p$ case for each prime $p\in\N$. This follows from the fact that maximal simplices of $\Bier(K)$ have cardinality $(m-1)$ and do not contain a pair $\{i,i'\}$ for any $1\leq i\leq m$.

Set $r=f_0(\Bier(K))-(m-1)$. By the definition of a Buchstaber number, this shows that
$$
s_p(K)\geq s(K)\geq r=f_0(\Bier(K))-(m-1)\text{ for any prime }p\in\N.
$$

On the other hand, we always have:
$$
s(K)\leq s_p(K)\leq f_0(\Bier(K))-(\dim(\Bier(K))+1)=f_0(\Bier(K))-(m-1)\text{ for any prime }p\in\N.
$$

Thus, one gets the following equality:
$$
s(K)=s_p(K)=f_0(\Bier(K))-(m-1)\text{ for any prime }p\in\N. 
$$

Finally, the right hand side equals $(m+f_0(K)-f_{m-2}(K))-(m-1)=f_0(K)-f_{m-2}(K)+1$.
\end{proof}

%%%%%%%%%%%%%%%%%%%%

Now we turn to the discussion of chordality for Bier spheres.

\begin{definition}
We say that a graph $\Gamma$ is \emph{chordal} if there are no induced cycles of length greater than $3$ in $\Gamma$; in other words, if $\Gamma_I=Z_k$, then $k=3$; such a full subcomplex is called \emph{chordless}. Finally, we call a simplicial complex $K$ \emph{chordal} if $\sk^1(K)$ is a chordal graph.
\end{definition}

\begin{definition}
We write $P\in\vc^{k}(\Delta^n)$ with $n\geq 2, k\geq 0$ and say that $P$ is a \emph{truncation polytope} if $P$ is combinatorially equivalent to a convex simple $n$-dimensional polytope obtained from $\Delta^n$ by a sequence of $k$ vertex truncations by means of hyperplanes in general position.  
\end{definition}

\begin{figure}[ht]
\tdplotsetmaincoords{70}{80}
\begin{tikzpicture}[tdplot_main_coords,line join=round,scale=1.2]
\pgfmathsetmacro\a{sqrt(2/9)}
\pgfmathsetmacro\b{sqrt(2/3)}
\pgfmathsetmacro\c{1/3}
\pgfmathsetmacro\r{0.3}
\path 
    coordinate(A) at (2*\a, 0, -\c)
    coordinate(B) at (-\a, \b, -\c)
    coordinate(C) at (-\a, -\b, -\c)
    coordinate(D) at (0,0,1)
     coordinate(AA) at ($(D)!\r!(A)$)
    coordinate(BB) at ($(D)!\r!(B)$)
    coordinate(CC) at ($(D)!\r!(C)$)
    coordinate(DB) at ($(B)!\r!(D)$)
    coordinate(AB) at ($(B)!\r!(A)$)
    coordinate(CB) at ($(B)!\r!(C)$)
    coordinate(DC) at ($(C)!\r!(D)$)
    coordinate(AC) at ($(C)!\r!(A)$)
    coordinate(BC) at ($(C)!\r!(B)$)
    coordinate(DA) at ($(A)!\r!(D)$)
    coordinate(BA) at ($(A)!\r!(B)$)
    coordinate(CA) at ($(A)!\r!(C)$);
\clip(-1,-0.8,-1) rectangle (1,0.6,1.4);
\draw[dashed] (-\a, \b, -\c)--(-\a, -\b, -\c);
\fill[gray!30,opacity=0.5] (2*\a, 0, -\c)--(-\a, \b, -\c)--(0,0,1)--(-\a, -\b, -\c)--cycle;
\draw (2*\a, 0, -\c)--(0,0,1)--(-\a, -\b, -\c)--cycle;
\draw (2*\a, 0, -\c)--(-\a, \b, -\c)--(0,0,1)--cycle;
\end{tikzpicture}
\hskip5mm
\begin{tikzpicture}[tdplot_main_coords,line join=round,scale=1.2]
\clip(-1,-0.8,-1) rectangle (1,0.6,1.4);
\draw[dashed] (B)--(C);
\fill[gray!30,opacity=0.5] (A)--(B)--(BB)--(CC)--(C)--cycle;
\draw (A)--(AA)
      (B)--(BB)
      (C)--(CC)
      (B)--(A)--(C)
      (BB)--(AA)--(CC)--cycle;
\end{tikzpicture}
\hskip5mm
\begin{tikzpicture}[tdplot_main_coords,line join=round,scale=1.2]
\clip(-1,-0.8,-1) rectangle (1,0.6,1.4);
\draw[dashed] (AB)-- (CB) (C)--(CB)--(DB);
\fill[gray!30,opacity=0.5] (A)--(AB)--(DB)--(BB)--(CC)--(C)--cycle;
\draw (A)--(AA)
      (DB)--(BB)
      (C)--(CC)
      (DB)--(AB)--(A)--(C)
      (BB)--(AA)--(CC)--cycle;
\end{tikzpicture}
\hskip5mm
\begin{tikzpicture}[tdplot_main_coords,line join=round,scale=1.2]
\clip(-1,-0.8,-1) rectangle (1,0.6,1.4);
\draw[dashed] (AB)-- (CB) (AC)--(BC) (DC)--(BC)--(CB)--(DB);
\fill[gray!30,opacity=0.5] (A)--(AB)--(DB)--(BB)--(CC)--(DC)--(AC)--cycle;
\draw (A)--(AA) (BB)--(CC)
      (BB)--(DB)--(AB)--(A)--(AC)--(DC)--(CC)--(AA)--cycle;
\end{tikzpicture}
\hskip5mm
\begin{tikzpicture}[tdplot_main_coords,line join=round,scale=1.2]
\clip(-1,-0.8,-1) rectangle (1,0.6,1.4);
\draw[dashed] (AB)-- (CB) (AC)--(BC) (DC)--(BC)--(CB)--(DB);
\fill[gray!30,opacity=0.5] (CA)--(BA)--(AB)--(DB)--(BB)--(CC)--(DC)--(AC)--cycle;
\draw (BA)--(DA)--(AA) (CA)--(DA) (BB)--(CC)
      (BB)--(DB)--(AB)--(BA)--(CA)--(AC)--(DC)--(CC)--(AA)--cycle;
\end{tikzpicture}
\caption{Some examples of types $\vc^0(\Delta^3)$, $\vc^1(\Delta^3)$, $\vc^2(\Delta^3)$, $\vc^3(\Delta^3)$, and $\vc^4(\Delta^3)$.}\label{fig:truncated}
\end{figure}
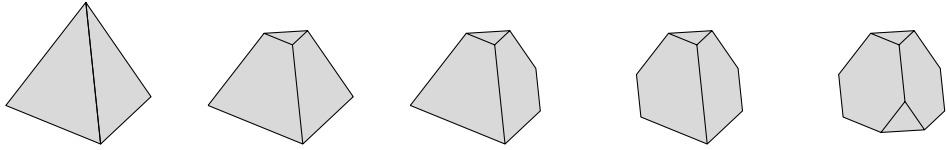

\begin{remark}
Note that the polytope $Q$ combinatorially dual to a truncation polytope $P$ is combinatorially equivalent to a convex simplicial $n$-dimensional polytope obtained from $\Delta^n$ by a sequence of $k$ stellar subdivisions in facets. Such a polytope is called a \emph{stacked polytope}. Observe that the combinatorial types of $P$ and $Q$ depend on the vertices truncated and facets subdivided, respectively, when $n\geq 3$ and $k\geq 3$. Namely, there is only one combinatorial type of $\vc^0(\Delta^3)$, $\vc^1(\Delta^3)$, and $\vc^2(\Delta^3)$, whereas there are 3 combinatorially different realizations of $\vc^3(\Delta^3)$ (see \Cref{fig:types}) and 7 combinatorially different realizations of $\vc^4(\Delta^3)$ \cite[Table 8]{HRS}.
\end{remark}

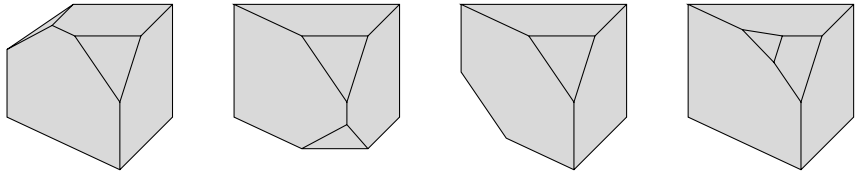
\begin{figure}[ht]
\begin{tikzpicture}
\pgfmathsetmacro\r{0.4}
\path 
    coordinate(A) at (0,0)
    coordinate(B) at (0.7,0.7)
    coordinate(C) at (-1.5,0.7)
    coordinate(A1) at ($(A)+(0,1.5)$)
    coordinate(B1) at ($(B)+(0,1.5)$)
    coordinate(C1) at ($(C)+(0,1.5)$)
    coordinate(AA) at ($(A1)!\r!(A)$)
    coordinate(AB) at ($(A1)!\r!(B1)$)
    coordinate(AC) at ($(A1)!\r!(C1)$)
    coordinate(CA) at ($(C1)!\r!(A1)$)
    coordinate(CB) at ($(C1)!\r!(B1)$)
    coordinate(CC) at ($(C1)!\r!(C)$)
    coordinate(XA) at ($(A)!\r!(A1)$)
    coordinate(XB) at ($(A)!\r!(B)$)
    coordinate(XC) at ($(A)!\r!(C)$)
    coordinate(YA) at ($(XA)!\r!(A)$)
    coordinate(YB) at ($(AB)!\r!(XA)$)
    coordinate(YC) at ($(AC)!\r!(XA)$)
    coordinate(ZA) at ($(C)!\r!(A)$)
    coordinate(ZB) at ($(C)!\r!(B)$)
    coordinate(ZC) at ($(C)!\r!(C1)$)
    coordinate(WA) at ($(AC)!\r!(AA)$)
    coordinate(WB) at ($(AC)!\r!(AB)$)
    coordinate(WC) at ($(AC)!0.2!(C1)$);
\clip (-1.6,-0.1) rectangle (0.8,2.3);    
\draw[dashed] (B)--(C);
\fill[gray!30,opacity=0.5] (A)--(B)--(B1)--(CB)--(CC)--(C)--cycle;
\draw (CB)--(CC)--(C)--(A)--(B)--(B1)--(CB)--(CA)--(AC)--(AB)--(AA)--(AC) (AB)--(B1) (AA)--(A) (CC)--(CA);
\end{tikzpicture}
\hskip5mm
\begin{tikzpicture}
\clip (-1.6,-0.1) rectangle (0.8,2.3);    
\draw[dashed] (B)--(C);
\fill[gray!30,opacity=0.5] (XC)--(XB)--(B)--(B1)--(C1)--(C)--cycle;
\draw (C1)--(C)--(XC)--(XB)--(B)--(B1)--(C1)--(AC)--(AB)--(AA)--(AC) (AB)--(B1) (AA)--(XA)--(XB) (XC)--(XA);
\end{tikzpicture}
\hskip5mm
\begin{tikzpicture}
\clip (-1.6,-0.1) rectangle (0.8,2.3);    
\draw[dashed] (B)--(ZB)--(ZC) (ZB)--(ZA);
\fill[gray!30,opacity=0.5] (A)--(B)--(B1)--(C1)--(ZC)--(ZA)--cycle;
\draw (C1)--(ZC)--(ZA)--(A)--(B)--(B1)--(C1)--(AC)--(AB)--(AA)--(AC) (AB)--(B1) (A)--(AA);
\end{tikzpicture}
\hskip5mm
\begin{tikzpicture}
\clip (-1.6,-0.1) rectangle (0.8,2.3);    
\draw[dashed] (B)--(C);
\fill[gray!30,opacity=0.5] (A)--(B)--(B1)--(C1)--(C)--cycle;
\draw (C1)--(C)--(A)--(B)--(B1)--(C1)--(WC)--(WA)--(WB)--(AB)--(AA)--(WA) (AB)--(B1) (A)--(AA) (WC)--(WB);
\end{tikzpicture}
\caption{Examples of types of $\vc^3(\Delta^3)$. The last two polytopes are combinatorially equivalent.}\label{fig:types}
\end{figure}
 
The next result follows from \cite[Theorem 8.5]{Kalai}. We give it with proof here for the sake of completeness. 

\begin{proposition}\label{TruncationAreChordalProp}
Let $P\in\vc^{k}(\Delta^n)$ with $n\geq 3, k\geq 0$ be a truncation polytope. Then $K_P$ is chordal. 
\end{proposition}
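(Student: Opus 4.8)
The plan is to pass to the dual picture and induct on the number $k$ of truncations. By the preceding Remark, the simplicial polytope $Q=P^*$ dual to $P=\vc^{k}(\Delta^n)$ is a stacked polytope, obtained from $\Delta^n$ by a sequence of $k$ stellar subdivisions of facets, and by definition $K_P=\partial Q$. Since chordality of $K_P$ concerns only its $1$-skeleton, it suffices to prove the following statement by induction on $k$: the graph $\sk^1(\partial Q)$ is chordal for every simplicial $n$-polytope $Q$ obtained from $\Delta^n$ by $k$ stellar subdivisions of facets.

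For the base case $k=0$ we have $Q=\Delta^n$ and $\partial Q=\partial\Delta_{[n+1]}$, whose $1$-skeleton is the complete graph on $n+1\geq 4$ vertices; a complete graph contains no chordless cycle of length greater than $3$, so it is chordal. The engine of the induction is the elementary fact that \emph{attaching a simplicial vertex to a chordal graph preserves chordality}: if $G$ is chordal and $v$ is a new vertex whose neighbourhood $N(v)$ is a clique of $G$, then $G\cup\{v\}$ is again chordal. Indeed, any induced cycle of length $\ell\geq 4$ either avoids $v$, and is then an induced cycle already present in $G$, a contradiction, or it passes through $v$; in the latter case $v$ has exactly two neighbours $a,b$ on the cycle, and since $\ell\geq 4$ these are non-consecutive on the cycle, hence non-adjacent because the cycle is induced, contradicting that $a,b\in N(v)$ form a clique.

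For the inductive step, suppose $Q'$ is obtained from $Q$ by one further stellar subdivision of a facet $F$. Since $Q$ is simplicial, $F$ is an $(n-1)$-simplex, and the stellar subdivision replaces $F$ by the new faces $\{v\}\cup G$ for the proper subfaces $G\subsetneq F$, introducing a single new vertex $v$ and removing no face of dimension less than $n-1$. At the level of $1$-skeleta this means that $\sk^1(\partial Q')$ is obtained from $\sk^1(\partial Q)$ by adjoining $v$ together with edges joining $v$ to exactly the vertices of $F$; as $F$ is a simplex, $V(F)$ is a clique, so $v$ is a simplicial vertex. By the inductive hypothesis $\sk^1(\partial Q)$ is chordal, and by the fact just established $\sk^1(\partial Q')$ is chordal as well. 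This closes the induction and proves the proposition.

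The only real content is the dictionary used in the first paragraph: a vertex cut on the simple polytope $P$ is dual to a stellar subdivision of a facet of the simplicial polytope $P^*$, and the subdivided facet is automatically a simplex because $P^*$ is simplicial, equivalently because $P$ is simple and so each truncated vertex lies on exactly $n$ facets. Once this is granted, chordality is forced by the clique structure of the faces of a simplex, and the combinatorial type of the truncation — which need not be unique, as the Remark notes — plays no role.
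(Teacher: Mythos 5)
Your proof is correct and follows essentially the same route as the paper: induction on the number $k$ of truncations, passing to the dual stacked polytope and observing that each stellar subdivision of a facet adds one new vertex whose neighbourhood in the $1$-skeleton is a clique. You package the inductive step as the standard ``adding a simplicial vertex preserves chordality'' lemma, whereas the paper runs the same two-case analysis ($v\notin S$ versus $v\in S$) directly on a putative chordless cycle $S$; the content is identical.
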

\begin{proof}
Since for $P=\Delta^n, n\geq 3$ the statement is true, we can fix $n$ and use induction on $k\geq 0$.

To make the induction step, suppose $P\in\vc^{k+1}(\Delta^n)$ and $n\geq 3, k\geq 0$. Then the dual stacked polytope $Q$ was obtained by performing a stellar subdivision with apex $v$ in a facet $\sigma=\{i_1,\ldots,i_n\}$ of a stacked polytope $Q'$ which is dual to a certain simple polytope in $\vc^k(\Delta^n)$.

Suppose $S$ was an induced $p$-cycle in the graph $\sk^1(\partial Q)$ and $p\geq 4$. We have two possible cases. 

If $v\notin S$, then $S$ was an induced cycle in $\partial Q'$, which is chordal by inductive assumption, hence $p=3$, a contradiction. Indeed, since $\sigma$ was a simplex in $\partial Q'$, the induced chordless cycle $S$ of length $p\geq 4$ could intersect it in no more than two vertices. Then $S$ remains a full subcomplex in $\partial Q$ after the stellar subdivision in $\sigma$ was performed. 

If $v\in S$, then at least two vertices, $u$ and $w$, of $\sigma$ must belong to the cycle $S$. Therefore, $S$ must contain all the three edges, $\{u,v\}, \{u,w\}$, and $\{v,w\}$. This shows that $S$ is not a chordless cycle, a contradiction. This finishes the proof.
\end{proof}

The converse of the previous statement is not true; e.g. if $P$ was obtained from $\Delta^2\times\Delta^2$ by a vertex cut, then $P$ is not a truncation polytope, but $K_P$ is a chordal complex. We are ready to state the result, which provides the classification of all chordal Bier spheres in dimensions greater than one. It implies that for Bier spheres of dimension greater than one, being a stacked sphere and being a chordal complex are equivalent.

\begin{lemma}\label{ChordalKeyLemma}
Let $K\neq\Delta_{[m]}$ be a simplicial complex on $[m]$ with $m\geq 4$. Then $\Bier(K)$ is chordal if and only if either $K$ or $K^\vee$ contains no edges.
\end{lemma}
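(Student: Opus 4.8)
The plan is to reduce everything to the $1$-skeleton of $\Bier(K)$, so I would first record its edges explicitly from the deleted-join definition: for $i,j\in[m]$ the pair $\{i,j\}$ is an edge iff $\{i,j\}\in K$; the pair $\{i',j'\}$ is an edge iff $\{i',j'\}\in K^\vee$ (equivalently $[m]\setminus\{i,j\}\notin K$); the ``antipodal'' pair $\{i,i'\}$ is never an edge; and a mixed pair $\{i,j'\}$ with $i\ne j$ is an edge iff $i\in V(K)$ and $j'\in V(K^\vee)$. In particular $\sk^1(K)$ and $\sk^1(K^\vee)$ sit inside $\sk^1(\Bier(K))$ as full subgraphs. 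Throughout I would exploit the symmetry $\Bier(K)=\Bier(K^\vee)$ and $(K^\vee)^\vee=K$.

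For the implication $(\Leftarrow)$ I may assume $K$ has no edges. Since $m\ge4$, every $(m-2)$- and $(m-1)$-subset of $[m]$ has at least two elements, hence is a non-face of the edgeless $K$; reading off the edge rules this yields $V(K^\vee)=[m']$ and $\sk^1(K^\vee)=K_m$. Thus $\sk^1(\Bier(K))$ consists of an independent set $V(K)\subseteq[m]$, a clique on $[m']$, and all cross edges $\{i,j'\}$ with $i\in V(K)$ and $i\ne j$. I would then rule out induced cycles of length $\ge4$: on an induced cycle the clique $[m']$ contributes at most two vertices (three pairwise-adjacent vertices would already be a triangle), while the independent set $V(K)$ contains no two cyclically consecutive vertices and hence needs at least as many $[m']$-vertices to separate them; together these force the length below $4$, and the single borderline case, a $4$-cycle alternating between the two parts, is excluded because its two $[m']$-vertices would be non-consecutive yet adjacent, giving a chord.

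For $(\Rightarrow)$ I argue by contradiction: assume $\Bier(K)$ is chordal and $K$ has an edge, and prove $K^\vee$ is edgeless. The engine is two families of induced $4$-cycles. First, if a vertex $i$ of $K$ has two non-adjacent neighbours $j,k$, then $[m]\setminus\{i\}\notin K$ (otherwise $j,k$ would be adjacent), so $i'\in V(K^\vee)$ and $i-j-i'-k-i$ is an induced $4$-cycle; chordality therefore forbids an induced path on three vertices in $\sk^1(K)$, i.e. $\sk^1(K)$ is a disjoint union of cliques, and symmetrically so is $\sk^1(K^\vee)$. Second, if $\{i,j\}\in K$ with $[m]\setminus\{i,j\}\notin K$, then $\{i',j'\}\in K^\vee$ and $i-j-i'-j'-i$ is an induced $4$-cycle; hence every edge $\{i,j\}$ of $K$ satisfies $[m]\setminus\{i,j\}\in K$.

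Applying both facts to a fixed edge $\{a,b\}\in K$ gives $S:=[m]\setminus\{a,b\}\in K$, so $V(K)=[m]$ and $S$ is an $(m-2)$-clique of $\sk^1(K)$. A disjoint union of cliques containing $S$ together with the edge $\{a,b\}$ is either complete or equals $K_S\sqcup K_{\{a,b\}}$. In the complete case every pair is an edge, so the second fact forces every $(m-2)$-subset into $K$, whence $K^\vee$ has no edges, as desired. The split case must be excluded: for $m\ge5$ it already violates the second fact, since an edge $\{s,t\}\subseteq S$ has $[m]\setminus\{s,t\}$ containing both $a$ and some $u\in S$, giving the non-face pair $\{a,u\}$; for $m=4$ one has $K=\Delta_{\{a,b\}}\sqcup\Delta_S$, and computing $\MF(K)$ and dualizing shows $\sk^1(K^\vee)=K_{2,2}$, a chordless $4$-cycle, contradicting that $\sk^1(K^\vee)$ is a union of cliques. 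I expect the main obstacle to be exactly this structural step in $(\Rightarrow)$: once the two induced-$4$-cycle families are forbidden, pinning down $\sk^1(K)$ and then disposing of the split configuration is delicate precisely at $m=4$, where the constraints on $K$ alone are consistent and the contradiction only surfaces after passing to $K^\vee$ and recognizing its $1$-skeleton as $K_{2,2}$; the surrounding identifications of edge sets via $\MF$ and Alexander duality are the routine computations.
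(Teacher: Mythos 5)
Your proof is correct, but it takes a genuinely different route from the paper in both directions. For $(\Leftarrow)$ the paper realizes $\Bier(K)$ as the boundary complex of a stacked polytope (stellar subdivisions of facets of $\partial\Delta_{[m]}$) and invokes \Cref{TruncationAreChordalProp}; your argument instead works purely graph-theoretically, observing that $\sk^1(\Bier(K))$ is a split graph (independent set $V(K)$, clique $[m']$, plus cross edges) and killing long induced cycles by the counting argument $a\le b\le 2$. This is more elementary and self-contained, but it forgoes the polytopal realization that the paper reuses in \Cref{ChordalBierTheorem}. For $(\Rightarrow)$ the two proofs share the same engine --- the induced $4$-cycles on $\{i,j,i',j'\}$ and on $\{i,j,i',k\}$ coming from the deleted-join edge rules --- but assemble it differently: the paper fixes an edge $\{1,2\}$ and grinds directly to completeness of $\sk^1(K)$ (using, along the way, a chordless $5$-cycle on $\{1,2,1',i,2'\}$ to handle the case where both $\{1,i\}$ and $\{2,i\}$ fail), whereas you first extract the structural facts that $\sk^1(K)$ and $\sk^1(K^\vee)$ are $P_3$-free (disjoint unions of cliques) and that every edge $\{i,j\}$ forces $[m]\setminus\{i,j\}\in K$, then finish by a clean dichotomy (complete versus $K_S\sqcup K_{\{a,b\}}$), disposing of the split case for $m\ge5$ directly and for $m=4$ by computing $\sk^1(K^\vee)=K_{2,2}$. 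Your version avoids the $5$-cycle entirely and makes the only genuinely delicate configuration ($m=4$, two disjoint edges) explicit, at the cost of an extra case analysis; both are valid.
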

\begin{proof}
The implication $\Leftarrow$ follows immediately from \Cref{TruncationAreChordalProp}. Indeed, the case of a void complex is obvious. Suppose $k>0$ and $K^\vee$ is generated by its vertex set $\{1',2',\ldots,k'\}$. It follows from the definition of a Bier sphere that $\Bier(K)$ is obtained from $\partial\Delta_{[m]}$ by stellar subdivisions of the facets $[m]\setminus\{i\}$ by vertices $i'$ for $1\leq i\leq k$. The corresponding simple polytope $P_K$ is then a truncation polytope obtained from $\Delta^{m-1}$ by cutting off $k$ of its $m$ vertices, see~\Cref{fig:truncated} for the case $m=4$ and $0\leq k\leq 4$. Therefore, we obtain a polytopal Bier sphere, whose Bier polytope $P_K$ is of the type in $\vc^{k}(\Delta^{m-1})$.

To prove the implication $\Rightarrow$, suppose that $\Bier(K)$ is chordal and each of the graphs, $\sk^1(K)$ and $\sk^1(K^\vee)$, contains an edge. Without loss of generality, we can assume that $\{1,2\}\in K$. 

Then $\{1',2'\}\notin K^\vee$, otherwise $\Bier(K)$ contains a chordless 4-cycle on the vertices $\{1,2,1',2'\}$ as a full subcomplex and therefore $\Bier(K)$ is non-chordal, a contradiction. Note that if $\{i\}\notin K$ for some $i\geq 3$, then Alexander duality would imply that $\{1',2'\}\subset [m']\setminus\{i'\}\in K^\vee$, which contradicts $\{1',2'\}\notin K^\vee$. It follows that $f_0(K)=m$ and $K$ has no ghost vertices. The same argument shows that $K^\vee$ also has no ghost vertices.

Now, we are going to show that both $\{1,i\}$ and $\{2,i\}$ are in $K$ for each $i$ with $3\leq i\leq m$. Indeed, otherwise one of the following three cases occurs. If there exists an $i, 3\leq i\leq m$ such that $\{1,i\}, \{2,i\}\notin K$, then $\Bier(K)$ contains a chordless 5-cycle on the vertices $\{1,2,1',i,2'\}$ as a full subcomplex and therefore $\Bier(K)$ is non-chordal, a contradiction. 
If there exists an $i, 3\leq i\leq m$ such that $\{1,i\}\notin K$ and $\{2,i\}\in K$, then $\Bier(K)$ contains a chordless 4-cycle on the vertices $\{1,2,i,2'\}$ as a full subcomplex and therefore $\Bier(K)$ is non-chordal, a contradiction.
Finally, if there exists an $i, 3\leq i\leq m$ such that $\{1,i\}\in K$ and $\{2,i\}\notin K$, then $\Bier(K)$ contains a chordless 4-cycle on the vertices $\{1,2,1',i\}$ as a full subcomplex and therefore $\Bier(K)$ is non-chordal, a contradiction.

Next, observe that $\{i,j\}\in K$ for every pair of vertices $i, j$ such that $3\leq i\neq j\leq m$. Indeed, if $\{i,j\}\notin K$, then $\Bier(K)$ contains a chordless 4-cycle on the vertices $\{1,i,1',j\}$ as a full subcomplex and therefore $\Bier(K)$ is non-chordal, a contradiction.

Thus, $\sk^1(K)$ is a complete graph with $m$ vertices. Suppose there was an edge $\{i',j'\}\in K^\vee$ with $1\leq i\neq j\leq m$. Then $\Bier(K)$ contains a chordless 4-cycle on the vertices $\{i,j,i',j'\}$ as a full subcomplex and therefore $\Bier(K)$ is non-chordal, a contradiction. It implies that $\sk^1(K^\vee)$ has no edges, which contradicts our initial assumption. 
This finishes the proof.
\end{proof}

We summarize our results on chordality for Bier spheres in the next statement.

\begin{theorem}\label{ChordalBierTheorem}
Let $K\neq\Delta_{[m]}$ be a simplicial complex on $[m]$ with $m\geq 2$. Then the following three statements are equivalent:
\begin{itemize}
\item[(a)] $\Bier(K)$ is a chordal complex;
\item[(b)] $\Bier(K)$ is a polytopal sphere and one of the next three cases holds:
\begin{enumerate}
\item $P_K=\Delta^1$, where $m=2$;
\item $P_K=\Delta^2$, where $m=3$;
\item $P_K\in\vc^{k}(\Delta^{m-1})$, where $0\leq k\leq m$ and $m\geq 4$. \end{enumerate}
\item[(c)] One of the next three cases holds:
\begin{enumerate}
\item $m=2$;
\item $m=3$ and either $K$ or $K^\vee$ equals $\partial\Delta^2$;
\item $m\geq 4$ and either $K$ or $K^\vee$ has no edges.
\end{enumerate}
\end{itemize}
If $m\leq 3$, then $\Bier(K)$ is a stacked sphere being either the boundary of a simplex, or the $k$-cycle with $k=4, 5, $ or $6$. If $m\geq 4$, then $P_K$ is obtained by cutting
off $k$ vertices of the simplex $\Delta^{m-1}$, where $k$ is the number of vertices of the corresponding complex without edges; in this case, {\rm (a)}, {\rm (b)} and {\rm (c)} are equivalent to:
\begin{itemize}
\item[(d)] $\Bier(K)$ is a stacked sphere.
\end{itemize}
\end{theorem}
\begin{proof}
Let us prove the equivalence of (a) and (c). In case $m=2$ the unique $0$-dimensional Bier sphere is a chordal complex. In case $m=3$ the only chordal $1$-dimensional Bier sphere is the boundary of a triangle, see Figure~\ref{fig:1BierSpehers}. If $m\geq 4$, then the application of Lemma~\ref{ChordalKeyLemma} finishes the proof of the equivalence (a) $\Leftrightarrow$ (c). The equivalence (b) $\Leftrightarrow$ (c) follows directly from the definition of Bier sphere. Finally,  if $m\leq 3$, then $\Bier(K)$ is a stacked sphere due to Example~\ref{OneDimBierExample}; for $m\geq 4$, the implication (b) $\Rightarrow$ (d) is obvious, while the implication (d) $\Rightarrow$ (a) follows from Proposition~\ref{TruncationAreChordalProp}.     
\end{proof}

%\newpage 

\subsection*{Acknowledgements}
The authors are grateful to Djordje Barali\'c, Matvey Sergeev, and Rade \v{Z}ivaljevi\'c for numerous fruitful discussions, valuable comments and suggestions. 

Limonchenko was supported by the Serbian Ministry of Science, Technological Development, and Innovation through the Mathematical Institute of the Serbian Academy of Sciences and Arts.

Vavpeti\v c was supported by the Slovenian Research and Innovation Agency program P1-0292 and the grant J1-4031.

\normalsize

\end{document}